	\newtheorem{thm}{Theorem}[section]
	\newtheorem{cor}[thm]{Corollary}
	\newtheorem{lem}[thm]{Lemma}
	\newtheorem{prop}[thm]{Proposition}
	\theoremstyle{definition}
	\newtheorem{defn}[thm]{Definition}
	\newtheorem{exam}[thm]{Example}
	\newtheorem{ques}[thm]{Question}
	\theoremstyle{remark}
	\newtheorem{rem}[thm]{Remark}
	\numberwithin{equation}{section}
    \newcommand{\N}{\mathbb N}
	\newcommand{\Z}{\mathbb Z}
	\newcommand{\fix}{\mathrm{Fix}}
	\newcommand{\rk}{\mathrm{rk}}
\newcommand{\krk}{\mathrm{Krk}}
\newcommand{\ufgfp}{\mathrm{UFGFP}}
\newcommand{\id}{\mathrm{id}}
\newcommand{\fgfp}{\mathrm{FGFP}}
\newcommand{\fgfpm}{\mathrm{FGFP_m}}
\newcommand{\fgfpe}{\mathrm{FGFP_e}}
\newcommand{\fgfpa}{\mathrm{FGFP_a}}
\newcommand{\fp}{\mathrm{FP}}
\newcommand{\ufp}{\mathrm{UFP}}
\newcommand{\duf}{\mathrm{\mathfrak{D}_{uf}}}
\newcommand{\df}{\mathrm{\mathfrak{D}_{f}}}
	\newcommand{\aut}{\mathrm{Aut}}
	\newcommand{\edo}{\mathrm{End}}
\newcommand{\mon}{\mathrm{Mon}}
\begin{document}
	
\title{Explicit bounds for fixed subgroups of endomorphisms of free products}

\author{Jialin Lei}
\address{School of Mathematics and Statistics, Xi'an Jiaotong University, Xi'an 710049, CHINA}
\email{leijialin0218@stu.xjtu.edu.cn}

\author{Qiang Zhang}
\address{School of Mathematics and Statistics, Xi'an Jiaotong University, Xi'an 710049, CHINA}

\email{zhangq.math@mail.xjtu.edu.cn}

\date{\today}

\subjclass[2010]{20F65, 20F34, 57M07}

\keywords{Fixed subgroups, free products, Gromov hyperbolic groups, surface groups}

\thanks{The second author is partially supported by NSFC (Nos. 11961131004, 11971389 and 12271385).}
		
\begin{abstract}
For an automorphism $\phi$ of a free group $F_n$ of rank $n$, Bestvina and Handel showed that the rank $\rk(\fix(\phi))$ of the fixed subgroup is not greater than $n$ (the so-called Scott conjecture). Soon after Bestvina and Handel's announcement, their result was generalized by many authors in various directions. In this paper, we are interested in the fixed subgroups of endomorphisms of free products, focusing on explicit bounds for their ranks.
By introducing a concept of UFP degree and combining some results of O'Niell-Turner, Sela and Sykiotis, we get bounds for fixed subgroups of stably hyperbolic groups.
\end{abstract}
\maketitle

\section{Introduction}
For a finitely generated group $G$, the \emph{rank} of $G$ denoted $\rk(G)$ is the minimal number of generators of $G$. The rank $\rk(H)$ of a subgroup group $H$ of an abelian group $G$ can not be greater than $\rk(G)$. However, in general, the statement is false even in free groups. It is easy to see that the free group $F_n$ of rank $n$ is a subgroup of $F_2$ for all $n\in \N$.
Denote the monoid of endomorphisms (resp. monomorphisms, i.e. injective endomorphisms) of $G$ by $\edo(G)$ (resp. $\mon(G)$), and the group of automorphisms of $G$ by $\aut(G)$. For an endomorphism $\phi\in \edo(G)$, the \emph{fixed subgroup} of
$\phi$ is defined to be
 $$\fix(\phi) :=\{g\in G \mid \phi(g)=g\},$$
which is a subgroup of $G$ with many special properties.

For a free group $F_n$ of rank $n$, in 1975, Dyer and Scott \cite{DS75} proved that for a finite order automorphism $\phi$ of $F_n$, the rank $\rk(\fix(\phi))$ of the fixed subgroup is not greater than $n$. Moreover, Scott conjectured that $\rk(\fix(\phi))\leq n$ for any $\phi\in \aut(F_n)$, which is the so-called Scott conjecture. Once the conjecture was put forward, it attracted a lot of research, see \cite{Ve02} for a survey. Finally, in 1989, the conjecture was solved by Bestvina and Handel \cite{BH92}. Almost simultaneously, by introducing an important concept of \emph{stable image} (see Section \ref{Section. 2} for more details), Imrich and Turner \cite{IT89} reduced the fixed subgroups of endomorphisms to that of automorphisms, and then showed

\begin{thm}[Imrich-Turner, \cite{IT89}]\label{Scott conj for free gp endo}
If $\phi\in \edo(F_n)$, then $\rk(\fix(\phi))\leq n$.
\end{thm}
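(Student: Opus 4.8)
The plan is to use the \emph{stable image} of $\phi$ to reduce from an arbitrary endomorphism to an automorphism of a free group of rank at most $n$, and then to invoke the Bestvina--Handel theorem (the Scott conjecture) for automorphisms.

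First I would observe that the fixed points are forced into the stable image: if $g\in\fix(\phi)$, then $g=\phi(g)=\phi^2(g)=\cdots$, so $g\in\phi^k(F_n)$ for every $k\ge 0$, and hence $\fix(\phi)\subseteq F_\phi:=\bigcap_{k\ge 0}\phi^k(F_n)$; consequently $\fix(\phi)=\fix(\phi|_{F_\phi})$. By Nielsen--Schreier, every $\phi^k(F_n)$, and therefore $F_\phi$, is free.

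Next I would pin down the eventual behaviour of $\phi$ on its images. Since $\phi^{k+1}(F_n)=\phi\bigl(\phi^k(F_n)\bigr)$ is a homomorphic image of $\phi^k(F_n)$, we have $\rk(\phi^{k+1}(F_n))\le\rk(\phi^k(F_n))\le n$, so the ranks form a non-increasing sequence of non-negative integers and stabilize, say $\rk(\phi^k(F_n))=r\le n$ for all $k\ge m$. For such $k$, $\phi$ maps the free group $\phi^k(F_n)$ of rank $r$ onto the free group $\phi^{k+1}(F_n)$ of rank $r$; since a surjective homomorphism between free groups of equal finite rank is an isomorphism (the Hopf property), $\phi$ restricts to an isomorphism of $\phi^k(F_n)$ onto $\phi^{k+1}(F_n)$ for every $k\ge m$, and in particular $\phi|_{\phi^m(F_n)}$ is injective. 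I would then deduce $\phi(F_\phi)=F_\phi$: the inclusion $\phi(F_\phi)\subseteq F_\phi$ is clear, while for $x\in F_\phi$ one writes, for each $k\ge m$, $x=\phi(y_k)$ with $y_k\in\phi^k(F_n)$, and injectivity of $\phi$ on $\phi^m(F_n)$ forces all $y_k$ to equal one element $y\in\bigcap_{k\ge m}\phi^k(F_n)=F_\phi$, so $x=\phi(y)\in\phi(F_\phi)$.

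At this point $\phi|_{F_\phi}$ is an onto endomorphism of the free group $F_\phi$. The main obstacle, and the real content of the stable-image analysis, is to show that $F_\phi$ is finitely generated with $\rk(F_\phi)\le r$: it is a subgroup of $\phi^m(F_n)\cong F_r$ that is carried onto itself by the injective map $\phi|_{\phi^m(F_n)}$, and the crux is that forming the descending intersection $\bigcap_{k\ge m}\phi^k(F_n)$ cannot raise the rank, which one establishes from the structure of this nested chain of subgroups (e.g.\ via Stallings core graphs, or via Goldstein--Turner-type finiteness results for monomorphisms of free groups). Granting this, $\phi|_{F_\phi}$ is a surjective endomorphism of a finitely generated free group, hence an automorphism by the Hopf property once more. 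Finally, applying the Bestvina--Handel theorem to $\phi|_{F_\phi}\in\aut(F_\phi)$ gives $\rk(\fix(\phi|_{F_\phi}))\le\rk(F_\phi)\le r\le n$, and since $\fix(\phi)=\fix(\phi|_{F_\phi})$ we conclude $\rk(\fix(\phi))\le n$.
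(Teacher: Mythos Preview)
Your proposal is correct and follows precisely the approach the paper sketches in Section~\ref{Section. 2}: reduce to the stable image $\phi^{\infty}(F_n)$, show $\phi_\infty$ is an automorphism of a free group of rank at most $n$, and then apply Bestvina--Handel. The paper itself does not give a detailed proof of this theorem---it attributes the argument to Imrich and Turner and records only the three steps you have reproduced---so your outline matches the paper's treatment, with the same acknowledged crux (bounding $\rk(\phi^{\infty}(F_n))$) deferred to the original source.
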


Soon after Bestvina and Handel's announcement, their result was generalized by many authors in various directions (for example, \cite{CT88, CT94, DV96, HW04, IT89, Pa89, Sh99} etc.). Moreover, it is necessary to mention some other recent articles and preprints which are relevant to fixed subgroups: inertness of fixed subgroups \cite{AJZ22, RV21, WZ}, algebraic applications of fixed subgroups \cite{Log22, Ven21}, algorithms for computing the rank of fixed subgroups \cite{BM16, CL22, Mut22}, fixed subgroups from topological viewpoint \cite{FH18, Lym22}, fixed subgroups of direct products \cite{Car22, WVZ, ZVW}, equalisers (more general structures than fixed subgroups) of homomorphisms \cite{Cio22, Log21}, and relationship between fixed subgroups and Nielsen fixed point theory \cite{ZZ1, ZZ2}.

For surface groups, a lot is known too (see \cite{JS, JWZ11, N2}). In this paper, a \emph{surface group} is the fundamental group $\pi_1(S)$ of a closed (orientable or not) surface $S$ with Euler characteristic $\chi(S)<0$. In \cite{JWZ11}, the authors showed

\begin{thm}[Jiang-Wang-Zhang, \cite{JWZ11}]\label{scott conj for surface gp}
Suppose $G$ is a surface group. Then for any endomorphism $\phi\in\edo(G)$,
$\rk(\fix(\phi))\leq \rk(G)$ with equality if and only if $\phi=\id$,
and $\rk(\fix(\phi))\leq \frac{1}{2} \rk(G)$ if $\phi$ is not epimorphic.
\end{thm}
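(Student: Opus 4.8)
The plan is to separate the argument according to whether $\phi$ is an epimorphism, using throughout the following standard facts about a surface group $G = \pi_1(S)$ with $\chi(S) < 0$ (and writing $n = \rk(G)$): $G$ is torsion-free and non-elementary (Gromov) hyperbolic; $G$ is Hopfian (finitely generated and residually finite); $\rk(G) = 2 - \chi(S) \ge 3$; centralizers of nontrivial elements of $G$ are infinite cyclic; and every subgroup of $G$ of infinite index is free (an infinite cover of $S$ is an open surface, hence homotopy equivalent to a graph).

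First I would dispose of the non-epimorphic case, which uses only Theorem~\ref{Scott conj for free gp endo}. If $\phi(G) \ne G$, then $\phi(G) \cong G/\ker\phi$ gives $\rk(\phi(G)) \le n$, while $\phi(G)$ cannot have finite index $d \ge 2$ in $G$: it would then be the fundamental group of a $d$-fold cover of $S$, of rank $2 - d\chi(S) \ge 2n - 2 > n$. Hence $\phi(G)$ has infinite index, so it is free, of rank $k$ say. Being a \emph{free quotient} of the surface group $G$, the group $\phi(G)$ has rank at most the inner rank (corank) of $G$, which equals $g$ when $S = S_g$ and $\lfloor h/2 \rfloor$ when $S = N_h$; in either case $k \le \tfrac{1}{2} n$. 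Finally $\psi := \phi|_{\phi(G)}$ is an endomorphism of the free group $\phi(G) \cong F_k$ (as $\phi(\phi(G)) \subseteq \phi(G)$), and $\fix(\phi) = \fix(\psi)$ because every $\phi$-fixed element already lies in $\phi(G)$; so Theorem~\ref{Scott conj for free gp endo} gives
\[ \rk(\fix(\phi)) = \rk(\fix(\psi)) \le k \le \tfrac{1}{2} n, \]
which is the last assertion of the theorem and, a fortiori, $\rk(\fix(\phi)) < n$ when $\phi$ is not epimorphic.

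In the epimorphic case, $\phi(G) = G$, so by Hopficity $\phi \in \aut(G)$. If $\phi$ is inner, $\phi = c_u$, then $\fix(\phi) = C_G(u)$, which is $G$ when $u = 1$ (i.e.\ $\phi = \id$) and is infinite cyclic, of rank $1 \le n - 1$, when $u \ne 1$. If $\phi$ is outer, I would realize it by a self-homeomorphism $f \colon S \to S$ not homotopic to the identity (Dehn--Nielsen--Baer) and invoke Nielsen fixed point theory: $\fix(\phi)$ is then conjugate to the fixed subgroup of some fixed point class of $f$ (or trivial, if $f$ is fixed-point-free), and the crucial input is a Bestvina--Handel--type inequality on surfaces bounding the rank of the fixed subgroup of any fixed point class of such an $f$ by $1 - \chi(S) = n - 1$. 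Thus $\rk(\fix(\phi)) \le n - 1 < n$ whenever $\phi \ne \id$, while $\fix(\id) = G$ has rank $n$.

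Combining the two cases gives $\rk(\fix(\phi)) \le n$ for every $\phi \in \edo(G)$, with equality forcing $\phi = \id$ --- equality is impossible in the non-epimorphic case since $\tfrac{1}{2} n < n$, and for a non-identity automorphism since $n - 1 < n$ --- together with the sharper bound $\rk(\fix(\phi)) \le \tfrac{1}{2} n$ when $\phi$ is not epimorphic. I expect the main obstacle to be the surface Bestvina--Handel inequality used in the outer-automorphism subcase: in contrast with the free-group theorems, an automorphism of a surface group is not carried by a graph, so one must argue directly on the surface (with train tracks adapted to its hyperbolic structure), and a careful analysis of the equality case of that inequality is exactly what pins down $\phi = \id$. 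The remaining ingredients --- Hopficity, freeness of infinite-index subgroups, the inner rank of surface groups, and the reduction of fixed subgroups of endomorphisms to those of free groups --- are standard.
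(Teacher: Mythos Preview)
The paper does not prove this theorem at all: it is quoted in the introduction as a known result from \cite{JWZ11} and is used only as a black box (notably in Lemma~\ref{uf index} and in the proof of Theorem~\ref{main thm 2}). There is therefore no ``paper's own proof'' to compare against.

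That said, your outline is correct and is essentially the strategy of the original source \cite{JWZ11}. Your non-epimorphic case is clean and self-contained: the image has infinite index (by the multiplicativity of Euler characteristic under finite covers), hence is free; its rank is bounded by the corank of the surface group, which is $g$ for $S_g$ and $\lfloor h/2\rfloor$ for $N_h$, i.e.\ at most $\tfrac12\rk(G)$; and then Theorem~\ref{Scott conj for free gp endo} finishes. For the automorphism case you correctly isolate the hard input --- a Bestvina--Handel/Thurston-type bound for fixed point classes of surface homeomorphisms --- which is precisely the main theorem of \cite{JWZ11} and is proved there via Nielsen fixed point theory. One small imprecision: the statement that ``$\fix(\phi)$ is conjugate to the fixed subgroup of some fixed point class of $f$'' needs care, since realizing an arbitrary automorphism by a homeomorphism only determines $f_*$ up to an inner automorphism; in \cite{JWZ11} this is handled by working with the stabilizer $\pi_1(f,\mathbf F)$ attached to each fixed point class and showing that every $\fix(\phi)$ arises this way after a suitable choice of basepoint and path. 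With that adjustment, your plan matches the cited proof.
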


To our knowledge, there are only a few results of this type for the fundamental group of a 3-manifold, see \cite{JWWZ21, LW14, Zhang1, Zhang2}.
In particular, Lin and Wang \cite{LW14} investigated the fundamental group $\pi_1(M)$ of a \emph{hyperbolic 3-manifold} $M$, i.e., $M$ is compact, orientable, and the interior of $M$ admits a complete hyperbolic structure of finite volume. For later use, we list their result in the following.

\begin{thm}[Lin-Wang, \cite{LW14}]\label{lin-wang hyper 3 mfd}
Suppose $\phi$ is an automorphism of $G =\pi_1(M)$, where M is a hyperbolic 3-manifold. Then $\rk(\fix(\phi)) < 2 \rk(G).$
\end{thm}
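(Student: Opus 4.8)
The plan is to combine Mostow--Prasad rigidity with a centralizer analysis, which reduces everything to a single delicate statement about totally geodesic surfaces. \emph{First, reduction to finite order.} Since $M$ has finite volume, Mostow--Prasad rigidity implies that $\mathrm{Out}(G)$ is finite, so $\phi^m=\iota_g$ (conjugation by $g$) is inner for some $m\ge 1$, and $\fix(\phi)\subseteq\fix(\phi^m)=C_G(g)$. As $G$ is neither trivial nor cyclic, $\rk(G)\ge 2$. If $g\ne 1$, then $C_G(g)$ is isomorphic to $\Z$ or $\Z^2$, so $\rk(\fix(\phi))\le 2<2\rk(G)$ and we are done. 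So assume $\phi$ has finite order $k$.

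\emph{Geometric realization.} Put $\hat G:=G\rtimes_\phi\langle t\rangle$ with $t^k=1$; then $\fix(\phi)=G\cap C_{\hat G}(t)$ has index at most $k$ in $C_{\hat G}(t)$. Choose an isometry $f$ of $M$ of order $k$ representing $[\phi]\in\mathrm{Out}(G)=\mathrm{Isom}(M)$. Because $G$ is non-elementary its center is trivial, so the extension of $\langle t\rangle$ by $G$ with outer action $[\phi]$ is unique, and it splits since $[\phi]$ lifts to $\phi\in\aut(G)$; hence $\hat G\cong G\rtimes_{f_*}\langle t\rangle\cong\pi_1^{\mathrm{orb}}(O)$ with $O=M/\langle f\rangle$ a finite-volume hyperbolic $3$-orbifold. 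Thus $\hat G$ is a lattice in $\mathrm{Isom}(\mathbb{H}^3)$, and $t$ is an elliptic element of order $k$ whose fixed set in $\mathbb{H}^3$ is a point, a geodesic, or a totally geodesic plane.

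\emph{Centralizer cases.} If $\fix(t)$ is a point, $C_{\hat G}(t)$ is finite; if $\fix(t)$ is a geodesic line, $C_{\hat G}(t)$ lies in the stabilizer of that line and is virtually cyclic (no parabolic can fix both endpoints of a geodesic). Either way $\fix(\phi)=G\cap C_{\hat G}(t)$ is torsion-free, hence $\{1\}$ or $\Z$, and $\rk(\fix(\phi))\le 1<2\rk(G)$. The only remaining case is $k=2$ with $t$ a reflection in a totally geodesic plane $P$; then $C_{\hat G}(t)=\stab_{\hat G}(P)$ is a lattice in $\mathrm{Isom}(\mathbb{H}^2)\times\Z/2$, so its torsion-free subgroup $\fix(\phi)=G\cap C_{\hat G}(t)$ is isomorphic to $\pi_1(\Sigma)$ for a finite-area hyperbolic surface $\Sigma$ --- concretely $\Sigma$ is a component of the fixed totally geodesic surface of the involution of $M$ realizing $\phi$, and $\pi_1(\Sigma)$ embeds in $G$.

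\emph{The crux.} It remains to prove $\rk(\pi_1(\Sigma))<2\rk(G)$ for such a totally geodesic $\Sigma$. Taking $M$ closed and $\Sigma$ separating (the other situations being handled by the analogous cut-and-double argument, relative to the cusp tori when present), $M$ is the double of a compact hyperbolic $N$ with totally geodesic boundary containing $\Sigma$, so $G=\pi_1(N)\ast_{\pi_1(\Sigma)}\pi_1(N)$ with $\pi_1(\Sigma)$ incompressible. If $g$ is the genus of $\Sigma$, the half-lives-half-dies principle gives $\dim_{\Q}\mathrm{im}\big(H_1(\Sigma;\Q)\to H_1(N;\Q)\big)=g$, so $\rk(\pi_1(\Sigma))=2g\le 2\dim_\Q H_1(N;\Q)$; and transfer identifies $H_1(N;\Q)$ with the $\phi$-invariant part of $H_1(M;\Q)$, whence $\dim_\Q H_1(N;\Q)\le\dim_\Q H_1(M;\Q)\le\rk(G)$ and $\rk(\pi_1(\Sigma))\le 2\rk(G)$. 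A Mayer--Vietoris computation shows this inequality is strict unless half-lives-half-dies is sharp for $N$, i.e.\ unless $\dim_\Q H_1(N;\Q)=g$, which forces $b_1(M)=g$ and reduces us to excluding the equality $\rk(G)=g$. I expect this last point to be the main obstacle: it should be ruled out using the amalgam decomposition of $G$ over a surface subgroup of strictly larger rank (via a Nielsen/Grushko-type rank bound for amalgamated products, or a Heegaard-genus estimate forced by the incompressibility of $\Sigma$), but extracting a clean argument here is the delicate step.
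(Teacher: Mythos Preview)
The paper does not contain a proof of this statement: Theorem~\ref{lin-wang hyper 3 mfd} is quoted verbatim from Lin--Wang \cite{LW14} and used only as input to Corollary~\ref{hyper 3 mfd gp}. So there is no ``paper's own proof'' to compare your attempt against.

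Evaluating your argument on its own terms: the reduction via Mostow--Prasad rigidity to the finite-order case, the geometric realization as an orbifold group, and the trichotomy on the fixed set of an elliptic isometry of $\mathbb{H}^3$ are all correct and are indeed the natural opening moves. The genuine gap is exactly where you flag it. In the reflection case you reach $\rk(\fix(\phi))=\rk(\pi_1(\Sigma))$ for a totally geodesic $\Sigma\subset M$, and your half-lives-half-dies/transfer argument only yields $\rk(\pi_1(\Sigma))\le 2\rk(G)$, not the strict inequality. Your proposed route to strictness---ruling out $\rk(G)=g$ via a Grushko/Nielsen-type rank bound for the amalgam $\pi_1(N)\ast_{\pi_1(\Sigma)}\pi_1(N)$---does not work as stated: rank is \emph{not} superadditive over amalgamated products in general, and the standard accessibility-type bounds go the wrong way here. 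Lin and Wang's actual argument for this step is more delicate and uses $3$-manifold topology (Heegaard genus and incompressibility estimates specific to the doubled manifold) rather than purely group-theoretic rank inequalities; your sketch does not supply a substitute for that. As written, then, the proposal is a correct outline through the easy cases but leaves the decisive case unproved, as you yourself acknowledge.

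One minor technical point worth tightening: when you assert that the torsion-free part $G\cap C_{\hat G}(t)$ is $\pi_1(\Sigma)$ for a single finite-area surface $\Sigma$, note that the fixed-point set of the involution on $M$ can have several components, and $\fix(\phi)$ is the fundamental group of only one of them (the one corresponding to the chosen lift $t$); this does not affect the bound but should be said.
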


In this paper, we are mainly interested in the fixed subgroups of free products, focusing on explicit bounds for their ranks.

For a group $G$, we say that $G$ has the \emph{finitely generated fixed subgroup property} of monomorphisms (resp. automorphisms, endomorphisms), abbreviated as $\fgfpm$ (resp. $\fgfpa$, $\fgfpe$), if for any $f\in \mon(G)$ (resp. $\aut(G)$, $\edo(G)$), the fixed subgroup $\fix(f)$ is finitely generated. Clearly, if $G$ has $\fgfpe$, then it has $\fgfpm$ and hence has $\fgfpa$. Theorem \ref{Scott conj for free gp endo} and \ref{scott conj for surface gp} imply that free groups and surface groups have $\fgfpe$. Furthermore, Lemma \ref{uf index} shows that the free groups $F_2$ and $\Z$ both have $\fgfpa$ but their direct product don't. Therefore, it is natural to ask

\begin{ques}\label{Question 1}
Is $\fgfpm$ ($\fgfpa$ or $\fgfpe$) preserved under taking free product? Moreover, if the answer is affirmative, what is the quantitative
relation among the explicit ranks of fixed subgroups of the free product $\ast_{i=1}^n G_i$ and the factor groups $G_1, G_2, \ldots, G_n$?
\end{ques}

For the cases $\fgfpm$ and $\fgfpa$, we have an affirmative answer to Question \ref{Question 1}, see \textbf{Theorem \ref{main thm 0}} (and Corollary \ref{hyper 3 mfd gp} on 3-manifold groups). Furthermore, to quantitatively analysis the ranks of fixed subgroups of a group $G$, we introduce a concept of $\ufp ~degree$, denoted $\duf(G)$ (see Definition \ref{def for UFP}) and show some explicit bounds for the fixed subgroups, see \textbf{Proposition \ref{Main Lemma}} and \textbf{Proposition \ref{key prop}} in Section \ref{Section. 3}.

For $\fgfpe$, we can give affirmative answers for some special kinds of groups. Note that Paulin \cite{Pa89} proved that Gromov hyperbolic groups have $\fgfpa$ (see \cite{HW04, Neu92, Sh99} for more information on fixed subgroups in hyperbolic groups).
In this paper, we consider stably hyperbolic groups. A Gromov hyperbolic group is \emph{stably hyperbolic} if for any $\phi\in \edo(G)$ and any $i\in\N$, there exists a $j>i$ such that $\phi^j(G)$ is hyperbolic. It seems quite posssible that all hyperbolic groups are actually stably hyperbolic \cite{OT}.

\begin{thm}\label{Main theorem 1}
Let $G=\ast_{i=1}^n G_i$ be a torsion-free stably hyperbolic group, and each factor $G_i$ with finite $\ufp$ degree $\duf(G_i)$.  Then for any endomorphism $\phi\in\edo(G)$,
$$\rk(\fix(\phi))\leq \frac{1}{4}\ell(\rk(G)+1)^2,$$
where the number $\ell=\max_{i=1}^n \duf(G_i)$, the maximal one of $\duf(G_i)$, $i=1,\ldots, n.$
\end{thm}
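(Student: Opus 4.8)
The plan is to reduce the endomorphism case to the monomorphism case via stable images, and then bound the rank of a fixed subgroup of a monomorphism by combining a free-product decomposition theorem (in the spirit of Sykiotis) with the UFP-degree machinery of Proposition~\ref{Main Lemma} and Proposition~\ref{key prop}. First I would recall the Imrich–Turner stable-image argument: for $\phi\in\edo(G)$, the descending chain $\phi(G)\supseteq\phi^2(G)\supseteq\cdots$ stabilizes in the sense that, since $G$ is stably hyperbolic, one can pick $j$ with $\phi^j(G)$ hyperbolic, and after passing to a suitable power $\psi=\phi^{N}$ one gets $\psi(G)=\psi^2(G)=:H$, a hyperbolic group on which $\psi|_H$ is an \emph{automorphism}, and crucially $\fix(\phi)=\fix(\phi^N)\cap\ker$-free part, so that $\fix(\phi)\leq\fix(\psi)=\fix(\psi|_H)$, a fixed subgroup of an automorphism of the hyperbolic group $H=\psi^N(G)\leq G$. (I would be careful here that $\fix(\phi)=\fix(\psi)$ when $\psi=\phi^N$: the nontrivial inclusion $\fix(\psi)\subseteq\fix(\phi)$ needs the standard argument that $x=\phi^N(x)$ forces $\phi(x)=x$ because $\phi$ is injective on $\fix(\psi^?)$ — this is exactly where torsion-freeness and the Turner-type lemma enter.)

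Next I would invoke the structure of $H=\psi^N(G)$ as a subgroup of the free product $G=\ast_{i=1}^nG_i$: by the Kurosh subgroup theorem, $H$ is itself a free product of a free group and of conjugates of subgroups of the $G_i$. I would want the bound $\rk(H)\leq\rk(G)$; since $H=\psi^N(G)$ is a quotient-image of $G$ under an endomorphism, it is generated by $\leq\rk(G)$ elements, so $\rk(H)\leq\rk(G)$ automatically. Then $\fix(\psi|_H)$ is a fixed subgroup of an automorphism of a hyperbolic group that splits as a free product whose factors are subgroups of the $G_i$ (hence have finite UFP degree, with $\duf\leq\ell$ — I would need that UFP degree does not increase when passing to subgroups, or at least is controlled; this should follow from Definition~\ref{def for UFP} and the lemmas preceding Proposition~\ref{Main Lemma}). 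Now apply Proposition~\ref{key prop} (the free-product bound) to $\fix(\psi|_H)$: this should give something of the shape $\rk(\fix(\psi|_H))\leq$ a quadratic expression in $\rk(H)$ with coefficient governed by $\max\duf$ of the factors, i.e. $\leq\tfrac14\ell(\rk(H)+1)^2\leq\tfrac14\ell(\rk(G)+1)^2$.

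The main obstacle I anticipate is the bookkeeping in the middle step: ensuring that the free-product factors appearing in the Kurosh decomposition of the stable image $H$ genuinely inherit finite UFP degree bounded by $\ell$, and that the free (infinite-cyclic) factors contribute in a way compatible with Proposition~\ref{key prop}. Concretely, Proposition~\ref{key prop} presumably bounds $\rk(\fix)$ for an automorphism of $\ast_{j}H_j$ in terms of $\sum$ or $\max$ of $\duf(H_j)$ and the number of factors, and the number of Kurosh factors of $H$ is itself controlled by $\rk(H)\leq\rk(G)$, which is what produces the squaring and the $\tfrac14$ (optimizing $\tfrac14(r+1)^2$ over a split of $r$ generators). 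A secondary subtlety: Proposition~\ref{key prop} is presumably stated for automorphisms (via Paulin/Sela for the hyperbolic factors), so the reduction in the first paragraph from $\edo$ to $\aut$ of the hyperbolic stable image must be genuinely complete — this is precisely what "stably hyperbolic" buys us, guaranteeing that the eventual image on which $\psi$ acts as an automorphism is itself hyperbolic, so that $\fgfpa$ for hyperbolic groups (Paulin) plus the quantitative free-product bound apply.

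Finally I would assemble: $\rk(\fix(\phi))\le\rk(\fix(\psi|_H))\le\tfrac14\ell(\rk(G)+1)^2$, completing the proof.
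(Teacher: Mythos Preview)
Your overall strategy---reduce to an injective self-map of a subgroup of controlled rank and then apply Proposition~\ref{key prop}---matches the paper, but the reduction step contains a genuine gap. You assert that for some power $\psi=\phi^{N}$ one has $\psi(G)=\psi^{2}(G)$, i.e.\ that the descending chain $\phi(G)\supseteq\phi^{2}(G)\supseteq\cdots$ stabilizes at a finite stage. This is false in general, even for torsion-free hyperbolic $G$: already for $G=\Z$ and $\phi(n)=2n$ the chain $2^{k}\Z$ is strictly decreasing forever. Stable hyperbolicity only guarantees that infinitely many terms $\phi^{j}(G)$ are hyperbolic, not that the chain terminates. (Your parenthetical claim that $\fix(\phi^{N})\subseteq\fix(\phi)$ follows from some ``Turner-type'' injectivity argument is also simply wrong---take $\phi(n)=-n$ on $\Z$---though in the final assembly you only use the correct inclusion $\fix(\phi)\subseteq\fix(\phi^{N})$, so this particular confusion is harmless.)

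The paper repairs the gap by working with the genuine stable image $\phi^{\infty}(G)=\bigcap_{n}\phi^{n}(G)$ rather than any finite-stage $\phi^{N}(G)$. Proposition~\ref{aut on stable image} (via Sela's descending chain condition for limit groups over a torsion-free hyperbolic group) shows that $\phi_{\infty}:=\phi|_{\phi^{\infty}(G)}$ is an automorphism with $\fix(\phi)=\fix(\phi_{\infty})$. The price is that $\rk(\phi^{\infty}(G))\le\rk(G)$ is no longer automatic, since $\phi^{\infty}(G)$ is not an epimorphic image of $G$; \emph{this} is precisely where stable hyperbolicity enters: by \cite[Proposition~2]{OT} the stable image is a free factor of some $\phi^{N}(G)$, whence $\rk(\phi^{\infty}(G))\le\rk(\phi^{N}(G))\le\rk(G)$. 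Proposition~\ref{key prop} (which, contrary to your guess, is stated for monomorphisms, not just automorphisms) applied to $H=\phi^{\infty}(G)\le G$ then yields the bound. Incidentally, your route can be salvaged: the proof of Proposition~\ref{aut on stable image} already shows $\phi|_{\phi^{N}(G)}$ is injective for some $N$, so taking $H=\phi^{N}(G)$ and applying Proposition~\ref{key prop} directly to the monomorphism $\phi|_{H}\in\mon(H)$ (noting $\fix(\phi)\subseteq H$ and $\rk(H)\le\rk(G)$) gives the same bound without ever invoking $\phi^{\infty}(G)$ or the O'Neill--Turner step.
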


As a corollary, we have

\begin{thm}\label{main thm 2}
Let $G=\ast_{i=1}^t G_i\ast F_s$ be a free product, where $F_s$ is a free group of rank $s$, and each factor $G_i$ is a surface group. Then
\begin{enumerate}[leftmargin=*]
\item for any endomorphism $\phi\in\edo(G)$, the fixed subgroup has rank
$$\rk(\fix(\phi))\leq \frac{1}{4}(\rk(G)+1)^2,$$
\item for any $\phi\in\mon(G)$, we have $\rk(\fix(\phi))\leq (s+t)(\rk(G)-s-t+1)$. In particular, if $s=0$ and all the surface groups $G_i$ share the same rank, then $$\rk(\fix(\phi))\leq \rk(G).$$
\end{enumerate}
\end{thm}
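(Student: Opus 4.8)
The plan is to read off part~(1) from Theorem~\ref{Main theorem 1} and part~(2) from the monomorphism case of the free-product results of this paper, the remaining content being bookkeeping with Kurosh and Grushko decompositions. First I would rewrite $F_s$ as the free product $\Z\ast\cdots\ast\Z$ of $s$ infinite cyclic groups, so that $G=\ast_{i=1}^{t}G_i\ast(\ast_{j=1}^{s}\Z)$ is a free product of $n=s+t$ torsion-free factors; in particular $G$ is torsion-free. To check that $G$ is stably hyperbolic, note that for $\phi\in\edo(G)$ the image $\phi^{j}(G)$ is a finitely generated subgroup of $G$, hence by the Kurosh subgroup theorem a finite free product of a free group with subgroups of conjugates of the $G_i$ and of $\Z$; a finitely generated subgroup of a surface group is free or again a surface group, and a subgroup of $\Z$ is trivial or $\Z$, so every such piece is hyperbolic and a finite free product of hyperbolic groups is hyperbolic. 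Finally, every factor has $\ufp$ degree at most $1$: this is immediate for $\Z$ (there $\fix(\psi)$ is $\Z$ or trivial) and follows from Theorem~\ref{scott conj for surface gp} for a surface group. Hence $\ell=\max_i\duf(G_i)=1$, and Theorem~\ref{Main theorem 1} gives $\rk(\fix(\phi))\le\frac{1}{4}(\rk(G)+1)^{2}$.

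For part~(2) I would write $G=\ast_{i=1}^{n}H_i$ with $n=s+t$, the $H_1,\dots,H_t$ being the surface groups and $H_{t+1},\dots,H_n\cong\Z$, and fix $\phi\in\mon(G)$. The explicit content of Theorem~\ref{main thm 0} and Proposition~\ref{key prop} supplies a Kurosh decomposition $\fix(\phi)=F_r\ast K_1\ast\cdots\ast K_m$ with $r+m\le n=s+t$, in which each nontrivial $K_j$ is conjugate into some $H_{i_j}$ and coincides, up to conjugacy, with the fixed subgroup of an endomorphism of $H_{i_j}$; by Theorem~\ref{scott conj for surface gp} (for a surface factor) and trivially (for a $\Z$ factor), this forces $\rk(K_j)\le\rk(H_{i_j})\le d$, where $d:=\max_i\rk(H_i)$. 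Grushko's theorem then gives
\[
\rk(\fix(\phi))=r+\sum_{j=1}^{m}\rk(K_j)\le r+md\le(r+m)d\le(s+t)\,d ,
\]
the middle step using $d\ge1$. Since a surface group has rank $\ge2$, one has $d\le\sum_{i=1}^{t}(\rk(G_i)-1)+1=\rk(G)-s-t+1$, which yields the stated bound $(s+t)(\rk(G)-s-t+1)$; and if $s=0$ with all $G_i$ of a common rank $\rho$, then $\rk(G)=t\rho$ and $d=\rho$ by Grushko, so the displayed chain already gives $\rk(\fix(\phi))\le(s+t)d=t\rho=\rk(G)$.

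The hard part is not in this corollary but upstream: it is the structural statement, encapsulated in Theorem~\ref{main thm 0}/Proposition~\ref{key prop}, that for a monomorphism of a free product the nontrivial Kurosh factors of the fixed subgroup are, up to conjugacy, fixed subgroups of endomorphisms of the freely indecomposable free-product factors, and that their number plus the rank of the free part does not exceed the Kurosh rank of the ambient group. Granting that, the corollary is purely arithmetic, and the only points that need attention are the degenerate cases ($t=0$, where $d=1$; or $r=0$, $m=0$), the classical fact that finitely generated subgroups of surface groups are free or surface groups (used for stable hyperbolicity), and the observation that surface groups and $\Z$ have $\ufp$ degree exactly $1$, so that $\ell=1$ in Theorem~\ref{Main theorem 1}.
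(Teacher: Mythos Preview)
Your argument is correct and matches the paper's approach: part~(1) verifies the hypotheses of Theorem~\ref{Main theorem 1} with $\ell=1$, and part~(2) is exactly Proposition~\ref{Main Lemma} applied with $n=s+t$ and all $k_i=1$, which is what the paper invokes. Two small citation fixes: the structural decomposition $\fix(\phi)=F_r\ast K_1\ast\cdots\ast K_m$ with $r+m\le s+t$ that you use in part~(2) is supplied by Theorem~\ref{structure theorem} (and is already packaged in Proposition~\ref{Main Lemma}), not by Theorem~\ref{main thm 0} or Proposition~\ref{key prop}; and the claim $\duf=1$ for a surface group requires not only Theorem~\ref{scott conj for surface gp} but also Theorem~\ref{Scott conj for free gp endo} for the free subgroups, as recorded in Lemma~\ref{uf index}.
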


For a graph group $G$ (i.e. right angled Artin group), Rodaro, Silva and Sykiotis \cite{RSS13} showed: $G$ has $\fgfpe$ if and only if it is a free product of finitely many free abelian groups of finite ranks.
Now, we give explicit bounds for the fixed subgroups of free products of free abelian groups.

\begin{thm}\label{main thm 3}\label{free product of free abel}
Let $G=\ast_{i=1}^n \Z^{t_i}$ be a free product of free abelian groups $\Z^{t_i}$ of rank $t_i$. Then for any endomorphism $\phi\in \edo(G)$, we have
$$\rk(\fix(\phi))\leq n(\rk(G)-n+1).$$
In particular, if the ranks $t_1=t_2=\ldots =t_n$, then $\rk(\fix(\phi))\leq \rk(G)=\sum_{i=1}^n t_i.$
\end{thm}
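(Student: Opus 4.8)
The plan is to derive this as a special case of the machinery already set up for $\ufp$ degrees. First I would observe that each free abelian factor $\Z^{t_i}$ has finite $\ufp$ degree; indeed, every endomorphism of $\Z^{t_i}$ has finitely generated fixed subgroup (it is the kernel of the integer matrix $\phi-\id$ restricted to a lattice), and moreover $\rk(\fix(\phi))\leq t_i$, so by the definition of $\duf$ (Definition \ref{def for UFP}) one gets $\duf(\Z^{t_i})\leq 1$. Thus $\ell=\max_i\duf(\Z^{t_i})=1$ and the free product $G=\ast_{i=1}^n\Z^{t_i}$ falls under the hypotheses used in Proposition \ref{key prop}. The point is that for free products one should not lose the factor-by-factor gain: rather than applying the crude quadratic bound $\frac14\ell(\rk(G)+1)^2$ coming from Theorem \ref{Main theorem 1}, I would invoke the sharper estimate of Proposition \ref{key prop}, which for $n$ factors each of $\ufp$ degree $1$ should read $\rk(\fix(\phi))\leq n(\rk(G)-n+1)$ — this is exactly the same shape as the monomorphism bound in Theorem \ref{main thm 2}(2) with $s=0$, $t=n$, and $\rk(G)=\sum t_i$.

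The key steps, in order, would be: (1) verify $\duf(\Z^{t})<\infty$ with the explicit value (this is the ``base case'' computation and relies only on the structure of finitely generated abelian groups, so it is routine); (2) feed this into the free-product estimate of Proposition \ref{key prop} / Proposition \ref{Main Lemma}, which provides a bound of the form $\rk(\fix(\phi))\leq \big(\sum_i \duf(G_i)\big)\cdot\big(\rk(G)-\sum_i\duf(G_i)+1\big)$ or similar, using the Kurosh subgroup theorem together with the Bestvina--Handel-type count applied to the induced action on the free part of the splitting; (3) substitute $\duf(\Z^{t_i})=1$ to collapse the sum to $n$, yielding $\rk(\fix(\phi))\leq n(\rk(G)-n+1)$. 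For the ``in particular'' clause, when $t_1=\cdots=t_n=t$ we have $\rk(G)=nt$, so $n(\rk(G)-n+1)=n(nt-n+1)$; to get the claimed bound $\rk(\fix(\phi))\leq \rk(G)=nt$ I would instead argue directly that in the equal-rank case the relevant splitting is already balanced and apply the sharper symmetric estimate (the same phenomenon that makes surface-group free products of equal rank satisfy $\rk(\fix(\phi))\leq\rk(G)$ in Theorem \ref{main thm 2}).

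The main obstacle I anticipate is \emph{not} the abelian base case but making sure the free-product reduction is applied with the correct bookkeeping: when $\phi$ is merely an endomorphism (not injective), one must first pass to the stable image $\phi^\infty(G)$, which by the Kurosh subgroup theorem is again a free product of (conjugates of subgroups of) the $\Z^{t_i}$'s together with a free group, and one needs $\fix(\phi)$ to sit inside a group of this controlled form without the rank inflating. Since free abelian groups — unlike surface groups — have the feature that every subgroup is again free abelian of no larger rank, the Kurosh factors stay within the same class and the induction goes through cleanly; this is precisely why the theorem is clean for $\ast\,\Z^{t_i}$ and is the reason the hypothesis ``$\duf(G_i)<\infty$'' suffices. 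I would therefore spend the bulk of the write-up on step (2), carefully citing Proposition \ref{key prop} and verifying its hypotheses, and dispatch steps (1) and (3) briefly.
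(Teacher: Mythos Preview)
Your proposal has a genuine gap: Propositions \ref{Main Lemma} and \ref{key prop} are stated and proved only for \emph{monomorphisms}, whereas Theorem \ref{free product of free abel} concerns an arbitrary endomorphism $\phi\in\edo(G)$. You anticipate this and propose to pass to the stable image $\phi^{\infty}(G)$, but that route is not available here. The paper's stable-image machinery (Proposition \ref{aut on stable image} and the argument in Theorem \ref{Main theorem 1}) rests on $G$ being torsion-free \emph{hyperbolic} --- it uses Sela's descending chain condition for $G$-limit groups and the O'Neill--Turner free-factor result for stably hyperbolic groups. As soon as some $t_i\geq 2$, the factor $\Z^{t_i}$ contains $\Z^2$, so $G$ is not hyperbolic and none of that applies. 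Even if you argue via residual finiteness that $\phi_\infty$ is injective, you still need $\phi^{\infty}(G)$ to have at most $n$ freely indecomposable factors (or rank at most $\rk(G)$), and neither is established: $\phi^{\infty}(G)$ is an intersection, not a quotient, and its Kurosh decomposition inside $G$ is uncontrolled a priori. So step (2) of your plan does not go through, and the bookkeeping you suggest in step (3) (a formula of the shape $(\sum\duf(G_i))(\rk(G)-\sum\duf(G_i)+1)$) does not appear anywhere in the paper.

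The paper's proof bypasses the stable image entirely. The key input you are missing is Theorem \ref{fixed subgp in nilpotent gp} (Sykiotis): for a free product of finitely generated nilpotent groups, the fixed subgroup of \emph{any} endomorphism has Kurosh rank at most $n$. Since each $\Z^{t_i}$ is abelian (hence nilpotent), this gives directly $\fix(\phi)=\ast_{j=1}^m H_j$ with $m\leq n$ and each $H_j$ either $\Z$ or a subgroup of a conjugate of some $\Z^{t_i}$, so $\rk(H_j)\leq t_1=\max_i t_i$. Then $\rk(\fix(\phi))\leq n t_1\leq n(\rk(G)-n+1)$, and in the equal-rank case $nt_1=nt=\rk(G)$ immediately --- which also explains why you found the ``in particular'' clause awkward to recover from the general bound.
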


Fixed subgroups of monomorphisms of free products are known regarding Kurosh rank. In this paper, by introducing the concept of UFP degree, we extend Kurosh rank to actual rank, and consider general endomorphisms of free products. Furthermore, combining some results of O'Niell-Turner, Sela and Sykiotis, we get a explicit bounds for the fixed subgroups of stably hyperbolic groups.
The paper is organized as follows. In Section \ref{Section. 2}, we review some useful facts on free products, especially Sykiotis' work on fixed subgroups of symmetric endomorphisms. In Section \ref{Section. 3}, we introduce some concepts on fixed subgroups to quantitatively analysis the ranks. At last, in Section \ref{Section. 4}, we give proofs of the main results, and show some examples.

\section{Preliminaries}\label{Section. 2}

In papers \cite{Sy02, Sy07}, Sykiotis studied the fixed subgroups of symmetric endomorphisms of free products of groups. For later use, let us first review some important definitions and facts in this section.

\subsection{Rank and Kurosh rank}
Let $G$ be a finitely generated group and let $H$ be a nontrivial subgroup of $G$.
The \emph{rank} of $G$ denoted $\rk(G)$ is the minimal number of generators of $G$. A famous result of Grushko showed that, $G$ can be represented as a free product of freely indecomposable factors,  $G=\ast_{i=1}^n G_i$, and the set of factors (and hence the number $n$ of the factors) is well-defined up to isomorphism. Then $n$ is said to be the (absolute) \emph{Kurosh rank} of $G$, denoted $\krk(G)$.
By the Kurosh subgroup theorem, the subgroup $H$ is a free product
$$H =\ast _{j=1}^t H_j \ast F_s,$$
where $F_s$ is a free group of rank $s$ and every factor $H_j$ is the intersection of $H$ with a conjugate of some factor $G_i$ (either of $s$ or $t$ could be $0$ or infinity). If the representation of $G$ as a free product is changed by an isomorphism, then these intersections change, but the number $s+t$ is invariant. We say $s+t$ the \emph{Kurosh rank of $H$ in} $G$. denoted $\krk_G(H)$.

Note that Grushko's theorem states that the rank of groups is additive under free products, i.e., $\rk(\ast_{i=1}^n G_i)=\sum_{i=1}^n \rk(G_i).$ Thus
$$\krk_G(H)\leq \krk(H)\leq \rk(H).$$
The first equality holds if $s=0$ and none of $H_j$ can be split as a nontrivial free products, and the second equality holds if each $H_j$ is cyclic.

\subsection{Stable image}
To study the fixed subgroups of endomorphisms of free groups, Imrich and Turner \cite{IT89} introduced the following definition.

\begin{defn}[\cite{IT89}]
For a group $G$ and an endomorphism $\phi\in\edo(G)$, the $stable~ image$ $\phi^{\infty}(G)$ of $\phi$ is the intersection $$\phi^{\infty}(G):=\bigcap_{n=1}^{\infty}\phi^n(G).$$
\end{defn}
Clearly, the fixed subgroup $\fix(\phi)=\fix(\phi_{\infty})\leq \phi^{\infty}(G)$, where $\phi_{\infty}: \phi^{\infty}(G)\to \phi^{\infty}(G)$ is the restriction of $\phi$ to the stable image $\phi^{\infty}(G)$.

For a free group $F_n$, Imrich and Turner proved that for arbitrary endomorphism $\phi\in\edo(F_n)$, $\phi_{\infty}\in \aut(\phi^{\infty}(F_n))$. Here $\phi^{\infty}(F_n)$ is a subgroup of $F_n$, so it is a free group. They also proved that $\rk(\phi^{\infty}(F_n))\leq \rk(F_n)=n$. Then the Bestvina-Handel theorem (Scott conjecture) implies that $\rk(\fix(\phi))=\rk(\fix(\phi_{\infty}))\leq \rk(\phi^{\infty}(F_n))\leq n$.

\subsection{Symmetric endomorphism}

Let $G=\ast_{i=1}^n G_i$ and $K=\ast_{i=1}^m K_i$ be two free products (the factors $G_i$ and $K_i$ may be freely decomposable). Sykiotis \cite{Sy07} gave the following definition.

\begin{defn}[\cite{Sy07}]
A homomorphism $\phi: G\rightarrow K$ is said to be \emph{symmetric (with respect to the given splitting)} if each non-infinite-cyclic free factor of $G$ is mapped by $\phi$ into a conjugate of some non-infinite-cyclic free factor of $K$.
\end{defn}

It is easy to see that if each factor $G_i$ is freely indecomposable, then each monomorphism is symmetric. Since surface groups are freely indecomposable, we have

\begin{lem}\label{symmtric of surface product}
Let $G=\ast_{i=1}^t G_i\ast F_s$ be a free product, where $F_s$ is a free group of rank $s$, and each factor $G_i$ is a surface group. Then every monomorphism $\phi\in\mon(G)$ is symmetric.
\end{lem}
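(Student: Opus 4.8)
The plan is to reduce the lemma to the observation recorded just before it — that a monomorphism is automatically symmetric as soon as all the free factors of the domain are freely indecomposable — and then to spell out why that observation holds, since that is exactly the content needed here. First I would fix the free product decomposition of $G$ with respect to which symmetry is tested: writing $F_s$ as a free product of $s$ infinite cyclic groups coming from a free basis, we may regard $G = G_1 \ast \cdots \ast G_t \ast \Z \ast \cdots \ast \Z$ as a free product of $s+t$ factors, each of which is freely indecomposable. Indeed $\Z$ is freely indecomposable, and each surface group $G_i$ is freely indecomposable because the fundamental group of a closed surface of negative Euler characteristic is one-ended (hence not a nontrivial free product); this is classical and follows, for instance, from the standard one-relator presentation, or from the fact that such a group is a Poincar\'e duality group of dimension $2$. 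In this decomposition the non-infinite-cyclic free factors of $G$ are precisely $G_1, \ldots, G_t$.

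Next, given $\phi \in \mon(G)$ and an index $i \in \{1, \ldots, t\}$, I would analyse the subgroup $\phi(G_i) \le G$ using the Kurosh subgroup theorem. Since $\phi$ is injective, $\phi(G_i) \cong G_i$, so $\phi(G_i)$ is itself a surface group — in particular nontrivial, not infinite cyclic, and freely indecomposable. The Kurosh subgroup theorem writes $\phi(G_i)$ as a free product of a free group with conjugate-intersection subgroups $\phi(G_i) \cap x C x^{-1}$, where $C$ runs over the factors $G_1, \ldots, G_t, \Z, \ldots, \Z$ and $x \in G$. A freely indecomposable, nontrivial, non-infinite-cyclic group admits only the trivial such decomposition, so the free part is trivial, there is exactly one surviving term, and $\phi(G_i) = \phi(G_i) \cap x C x^{-1}$ for a single factor $C$ and a single $x \in G$. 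If $C$ were one of the infinite cyclic factors, then $\phi(G_i)$ would embed in $x \Z x^{-1} \cong \Z$, contradicting that it is a surface group; hence $C = G_j$ for some $j$ and $\phi(G_i) \subseteq x G_j x^{-1}$.

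Thus every non-infinite-cyclic free factor $G_i$ of $G$ is carried by $\phi$ into a conjugate of the non-infinite-cyclic free factor $G_j$, which is exactly the definition of $\phi$ being symmetric. I expect the only genuinely substantive point to be the free indecomposability of surface groups, together with the small piece of bookkeeping in the Kurosh argument that rules out the intersections with conjugates of the $\Z$-factors from accounting for a surface group; the remainder is a direct unwinding of definitions. The same argument shows, more generally, that any monomorphism of a free product all of whose factors are freely indecomposable is symmetric, which is the assertion invoked in the text.
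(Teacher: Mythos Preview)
Your proposal is correct and follows exactly the approach indicated in the paper: the paper states the lemma without a formal proof, deducing it from the one-line observation just above (``if each factor $G_i$ is freely indecomposable, then each monomorphism is symmetric. Since surface groups are freely indecomposable\ldots''), and your argument simply unwinds that observation via the Kurosh subgroup theorem. Your choice to pass to the splitting $G_1 \ast \cdots \ast G_t \ast \Z \ast \cdots \ast \Z$ into freely indecomposable factors is precisely the splitting the paper has in mind, and the Kurosh step ruling out the free part and the cyclic conjugate-intersections is the content behind ``it is easy to see.''
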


For a free product $G=\ast_{i=1}^n G_i$ of freely indecomposable factors, Collins and Turner \cite{CT94} studied the fixed subgroups of an automorphism $\phi\in\aut(G)$, and showed that the Kurosh rank $\krk_G(\fix(\phi))$ of the fixed subgroup can not exceed $\krk(G)=n$. Sykiotis \cite{Sy07} extended Collins and Turner's result to monomorphisms. Since every group can be represented as a free product of freely indecomposable factors, we translate \cite[Corollary 4]{Sy07} to the following.

\begin{thm}[Sykiotis, \cite{Sy07}]\label{krk inj}
Let $G$ be a group and $\phi\in\mon(G)$ a monomorphism. Then the Kurosh rank
$\krk_G(\fix(\phi))\leq \krk(G).$
\end{thm}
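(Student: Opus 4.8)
The plan is to realise $\phi$ on a tree and reduce, through the stable image, to the automorphism case of Collins and Turner. First I would apply Grushko's theorem to rewrite $G=\ast_{i=1}^{n}G_i$ with every factor $G_i$ freely indecomposable, so that $\krk(G)=n$; since the Kurosh rank of a subgroup depends only on the isomorphism type of the pair and not on the chosen free-product splitting, this costs nothing (and if $\krk(G)=\infty$ there is nothing to prove). Because the factors are now freely indecomposable, $\phi$ is symmetric with respect to this splitting, which is the setting in which Sykiotis' and Collins--Turner's constructions operate.

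Next comes the reduction to automorphisms. Set $G_\infty=\phi^{\infty}(G)=\bigcap_{k\ge 1}\phi^{k}(G)$ and let $\phi_\infty$ be the restriction of $\phi$ to $G_\infty$. I claim $\phi_\infty\in\aut(G_\infty)$: it is injective because $\phi$ is, and it is surjective because, given $x\in G_\infty$, the unique $z\in G$ with $\phi(z)=x$ lies in every $\phi^{k}(G)$ --- indeed $x\in\phi^{k+1}(G)=\phi(\phi^{k}(G))$ together with the injectivity of $\phi$ forces $z\in\phi^{k}(G)$ --- hence $z\in G_\infty$. As $\fix(\phi)=\fix(\phi_\infty)$, Collins and Turner's bound applied to $\phi_\infty\in\aut(G_\infty)$ gives $\krk_{G_\infty}(\fix(\phi))\le\krk(G_\infty)$, and it would then remain to establish two comparisons: (a) $\krk_G(\fix(\phi))\le\krk_{G_\infty}(\fix(\phi))$ for the nested pair $\fix(\phi)\le G_\infty\le G$, which I would prove by routing the $\fix(\phi)$-action on the Bass--Serre tree of $G$ through the minimal $G_\infty$-invariant subtree; and (b) $\krk(G_\infty)\le\krk(G)=n$.

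The main obstacle is inequality (b): it is the free-product analogue of Imrich and Turner's theorem $\rk(\phi^{\infty}(F_n))\le n$, and it is genuinely not soft, since a descending chain of subgroups of Kurosh rank at most $n$ can intersect in a subgroup of arbitrarily large --- even infinite --- Kurosh rank, so one must exploit that the chain is $\{\phi^{k}(G)\}_{k}$ for a single symmetric monomorphism. Each $\phi^{k}(G)=\ast_{i=1}^{n}\phi^{k}(G_i)$ is itself a Grushko decomposition, so $\krk(\phi^{k}(G))=n$ for all $k$; the content is to pass this uniform bound to the intersection. I expect this to require Bass--Serre machinery: construct a $\phi$-equivariant simplicial map $f\colon T\to T$ on the Bass--Serre tree $T$ of the splitting (symmetry carries each noncyclic vertex stabiliser into the stabiliser of a unique vertex, so $f$ is forced on those vertices; extending $f$ equivariantly over the cyclic factors, the trivial-stabiliser vertices and the edges is where the delicacy lies), observe that $f$ is $\fix(\phi)$-equivariant and so descends to $T/\fix(\phi)$, and then bound the iterates of $f$ by a bounded-cancellation estimate; this simultaneously controls the number of vertex orbits and the first Betti number of the quotient graph of groups, which together are exactly the Kurosh rank. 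A variant of this tree argument applied directly to $\phi$ rather than to $\phi_\infty$ would bypass $G_\infty$ altogether. In any formulation, the bounded-cancellation and finiteness step for the non-surjective map $f$ is the crux, and it is precisely what carries Sykiotis' result beyond the automorphism case of Collins and Turner.
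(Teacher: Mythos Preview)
The paper does not prove this statement: it is quoted verbatim from Sykiotis \cite{Sy07} (Corollary~4 there), so there is no in-paper argument to compare against beyond the citation itself.

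As for your proposal, the detour through the stable image $G_\infty$ does not reduce the difficulty; it relocates it. You correctly flag inequality~(b), $\krk(G_\infty)\le n$, as the crux, and you correctly diagnose that establishing it requires building a $\phi$-equivariant simplicial map on the Bass--Serre tree and controlling the quotient via a bounded-cancellation argument. But that is exactly the machinery Sykiotis uses to bound $\krk_G(\fix(\phi))$ \emph{directly} for the monomorphism $\phi$, with no appeal to $G_\infty$ or Collins--Turner. So if you can execute the tree argument for~(b), you already have the theorem, and the intermediate steps --- passing to $\phi_\infty$, invoking Collins--Turner on $\aut(G_\infty)$, and proving the comparison~(a) --- are dead weight. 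There is also a circularity: before Collins--Turner can even be applied to $\phi_\infty$, you need $G_\infty$ to admit a \emph{finite} Grushko decomposition, which is precisely~(b); so~(b) is a prerequisite for, not a byproduct of, the automorphism step. Your own parenthetical remark --- that the tree argument ``applied directly to $\phi$ rather than to $\phi_\infty$ would bypass $G_\infty$ altogether'' --- identifies the correct route, and that is the one Sykiotis takes.
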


In general, the fixed subgroups of endomorphisms of a free product is more complicated than that of monomorphisms. But for some special groups, we can get some bounds for the fixed subgroups. For later use, we list the following result on free products of nilpotent groups.

\begin{thm}\cite[Theorem 7]{Sy07}\label{fixed subgp in nilpotent gp}
Let $G=\ast_{i=1}^n G_i$ be a free product of finitely generated nilpotent and finite groups. If $\phi\in\edo(G)$ is an endomorphism of $G$, then the fixed subgroup $\fix(\phi)$ of $\phi$ has Kurosh rank at most $n$.
\end{thm}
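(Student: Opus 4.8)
The plan is to follow the strategy of Imrich and Turner: pass to the stable image, arrange that the restriction of $\phi$ there is a monomorphism, and then apply the monomorphism bound of Theorem~\ref{krk inj}.

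So set $L:=\phi^{\infty}(G)=\bigcap_{k\ge 1}\phi^{k}(G)$ and let $\phi_{\infty}\colon L\to L$ be the restriction of $\phi$; recall that $\fix(\phi)=\fix(\phi_{\infty})\le L$. By the Kurosh subgroup theorem applied to $L\le G$, each factor in the Kurosh decomposition of $L$ is a subgroup of a conjugate of some $G_{i}$ --- hence a finitely generated nilpotent or a finite group, in particular freely indecomposable --- together with a free part, so this is the Grushko decomposition of $L$. Two facts need to be established: (i) $\krk(L)\le n$ (so that $L$ is finitely generated; being a finitely generated free product of residually finite groups, $L$ is then residually finite by Gruenberg's theorem, and hence Hopfian), and (ii) $\phi_{\infty}$ is onto $L$ (so that, by (i), $\phi_{\infty}$ is an automorphism of $L$, in particular a monomorphism). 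Granting these, Theorem~\ref{krk inj} applied to $\phi_{\infty}\in\mon(L)$ gives $\krk_{L}(\fix(\phi_{\infty}))\le\krk(L)\le n$; and since $\fix(\phi)=\fix(\phi_{\infty})$ and --- by a direct Bass--Serre argument, an element of $L$ being elliptic in the tree of $\ast_{i}G_{i}$ exactly when it is elliptic in the tree of the induced splitting of $L$ --- the Kurosh rank of a subgroup of $L$ with respect to the induced splitting of $L$ equals its Kurosh rank in $G$, we conclude $\krk_{G}(\fix(\phi))\le n$.

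To prove (i) and (ii) I would run the Imrich--Turner analysis on the descending chain $G\ge\phi(G)\ge\phi^{2}(G)\ge\cdots$. First, the Kurosh ranks $\krk(\phi^{k}(G))$ are non-increasing: $\phi^{k+1}(G)$ is generated by the $\phi$-images of the free factors of $\phi^{k}(G)$ and by the $\phi$-images of a free basis of its free part; the image of a nilpotent (resp.\ finite) factor is a nilpotent (resp.\ finite) subgroup of the free product $G$, hence is conjugate into some $G_{i}$ or is infinite cyclic, so it has Kurosh rank at most $1$ in $G$, and likewise each cyclic subgroup has Kurosh rank at most $1$; since a subgroup generated by $p$ subgroups each of Kurosh rank at most $1$ has Kurosh rank at most $p$, we get $\krk(\phi^{k+1}(G))\le\krk(\phi^{k}(G))$, with $\krk(\phi(G))\le n$ because $G=\langle G_{1},\dots,G_{n}\rangle$. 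Hence $\krk(\phi^{k}(G))=m\le n$ for all $k\ge k_{0}$. Next, for $k\ge k_{0}$ the surjection $\phi\colon\phi^{k}(G)\twoheadrightarrow\phi^{k+1}(G)$ is between free products of finitely generated nilpotent and finite groups of the same Kurosh rank $m$; introducing a secondary length invariant --- for instance the sum over the freely indecomposable factors of the Hirsch length of each nilpotent factor and $\log$ of the order of each finite factor --- which is non-increasing under such surjections and drops strictly unless the surjection is injective, one finds $k_{1}\ge k_{0}$ with $\phi$ injective on $\phi^{k}(G)$ for all $k\ge k_{1}$. From this (i) follows (the stable image of a monomorphism of such a group is again one of finite Kurosh rank $\le m$), and (ii) follows by the usual preimage argument: an element of $L=\bigcap_{k\ge k_{1}}\phi^{k}(G)$ has, for each such $k$, a unique $\phi$-preimage in $\phi^{k}(G)$, and these preimages agree and lie in $L$.

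The main obstacle is this middle step: showing that $\phi$ eventually acts as an automorphism on the stable image and that the stable image is finitely generated of Kurosh rank at most $n$. Over free groups this is exactly where Imrich and Turner use that the ranks $\rk(\phi^{k}(F_{n}))$ are non-increasing integers and that an equal-rank surjection of free groups is an isomorphism; here the integer invariant ``rank of a free group'' must be replaced by the pair consisting of the Kurosh rank and the length invariant above, and one must verify that an equal-invariant surjection between finitely generated free products of finitely generated nilpotent and finite groups is an isomorphism --- which rests on residual finiteness (hence Hopficity) of the factors and on the additivity of Hirsch length in short exact sequences of finitely generated nilpotent groups. Once this is in place, the two Kurosh-rank comparisons and the invocation of Theorem~\ref{krk inj} are routine.
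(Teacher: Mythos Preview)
The paper does not prove this statement: Theorem~\ref{fixed subgp in nilpotent gp} is quoted from \cite[Theorem~7]{Sy07} and used only as a black box in the proof of Theorem~\ref{main thm 3}. There is therefore no proof in the paper to compare your proposal against. The paper does, however, record in the remark following Proposition~\ref{aut on stable image} the key lemma from the same source, \cite[Lemma~5]{Sy07}: for an endomorphism $\phi$ of a finitely generated residually finite group, the restriction $\phi_{\infty}$ is a monomorphism. Since finitely generated nilpotent groups and finite groups are residually finite and residual finiteness is preserved under free products, this applies directly to $G$ and yields your step~(ii) without the secondary ``length invariant'' descent; your more hands-on route via Hirsch length and orders of finite factors is a valid alternative reconstruction of the same fact.

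Two places in your sketch deserve a flag. First, the subadditivity claim ``a subgroup of $G$ generated by $p$ subgroups each of Kurosh rank at most $1$ has Kurosh rank at most $p$'' is true in a free product with trivial edge groups, but it is a genuine Bass--Serre/folding statement, not a formality (note that absolute Kurosh rank can \emph{increase} under surjections, e.g.\ a closed surface group surjects onto $F_{2}$, so one really must argue inside $G$). Second, your passage from $\krk(\phi^{k}(G))\le n$ for all $k$ to $\krk(L)\le n$ for $L=\bigcap_{k}\phi^{k}(G)$ is not automatic: Kurosh rank need not behave well under nested intersections, and ``$\phi$ is injective on $\phi^{k_{1}}(G)$'' does not force the chain to stabilise. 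One clean way around this is to observe that once $\phi_{\infty}\in\mon(L)$, Theorem~\ref{structure theorem} applied to the Grushko decomposition of $L$ gives $\fix(\phi)=\ast_{j}\fix(\phi_{\infty}|_{g_{j}L_{\sigma(j)}g_{j}^{-1}})\ast F_{s}$ with $s+t\le\krk(L)$; but each nontrivial $\fix(\phi_{\infty}|_{g_{j}L_{\sigma(j)}g_{j}^{-1}})$ already lies in a conjugate of some $G_{i}$ and contributes at most one elliptic factor in $G$, so the bound $\krk_{G}(\fix\phi)\le n$ can be read off from the containment $\fix\phi\le\phi^{k}(G)$ together with $\krk_{G}(\phi^{k}(G))\le n$ and the fact that Kurosh rank in $G$ is monotone under inclusion of subgroups when edge groups are trivial. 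With these two points tightened, your outline matches Sykiotis's argument.
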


By using Bass-Serre theory, combining Theorem \ref{krk inj} and a structure theorem \cite[Theorem 1.2]{Sy02} of the fixed subgroup of symmetric endomorphisms, we have

\begin{thm}\label{structure theorem}
Let $G=\ast_{i=1}^nG_i$ be a free product of freely indecomposable factors. Then for any monomorphism $\phi\in\mon(G)$,
$$\fix(\phi)=\ast_{j=1}^t \fix(\phi|_{g_jG_{\sigma(j)}g_{j}^{-1}})\ast F_s,$$
where $\krk_G(\fix(\phi))=s+t\leq n$, and $\phi|_{g_jG_{\sigma(j)}g_{j}^{-1}}:{g_jG_{\sigma(j)}g_{j}^{-1}}\to {g_jG_{\sigma(j)}g_{j}^{-1}}$ is the restriction of $\phi$  to a conjugate of some factor $G_{\sigma(j)}$.
\end{thm}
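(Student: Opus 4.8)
The plan is to follow the recipe announced above: use Bass--Serre theory to read off the free-product shape of $\fix(\phi)$ and its Kurosh rank, quote Sykiotis' structure theorem \cite[Theorem 1.2]{Sy02} to recognise the non-free factors as fixed subgroups of honest restrictions of $\phi$, and feed the result into Theorem \ref{krk inj} to bound the number of factors.

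First I would observe that, since every free factor $G_i$ is freely indecomposable and $\phi$ is injective, $\phi$ carries each non-infinite-cyclic $G_i$ isomorphically onto a freely indecomposable subgroup of some conjugate of a (necessarily non-infinite-cyclic) factor of $G$; hence $\phi$ is symmetric with respect to the given splitting, which is what licenses the use of \cite[Theorem 1.2]{Sy02}. Next I would let $T$ be the Bass--Serre tree of the splitting $G=\ast_{i=1}^n G_i$, on which $G$ acts with vertex stabilizers the conjugates $gG_ig^{-1}$ (in one orbit) and trivial edge stabilizers. Restricting the action to $\fix(\phi)$ and passing to the quotient graph of groups $\fix(\phi)\backslash T$, whose edge groups are trivial, I would read off a free-product decomposition
$$\fix(\phi)=\ast_{j=1}^{t}\bigl(\fix(\phi)\cap g_jG_{i_j}g_j^{-1}\bigr)\ast F_s,$$
where the $t$ factors are the nontrivial intersections of $\fix(\phi)$ with conjugates of the $G_i$ (one per $\fix(\phi)$-orbit of such vertices) and $F_s=\pi_1(\fix(\phi)\backslash T)$ is free. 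Since this is exactly the decomposition furnished by the Kurosh subgroup theorem for $\fix(\phi)\le\ast_{i=1}^n G_i$, it follows that $\krk_G(\fix(\phi))=s+t$.

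It then remains to identify each nontrivial intersection $\fix(\phi)\cap g_jG_{i_j}g_j^{-1}$ as the fixed subgroup of a restriction of $\phi$, and to bound $s+t$. For the first I would invoke \cite[Theorem 1.2]{Sy02}: symmetry of $\phi$, together with the malnormality of free factors in a free product (distinct conjugates of the $G_i$ intersect trivially, which permits a normalization of the conjugating elements), forces the conjugates $g_jG_{i_j}g_j^{-1}$ that carry fixed points to be $\phi$-invariant, so that $\phi$ genuinely restricts to an endomorphism $\phi|_{g_jG_{\sigma(j)}g_j^{-1}}$ of each of them and $\fix(\phi)\cap g_jG_{i_j}g_j^{-1}=\fix(\phi|_{g_jG_{\sigma(j)}g_j^{-1}})$. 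For the second, since the $G_i$ are freely indecomposable we have $\krk(G)=n$ by definition, so Theorem \ref{krk inj} applied to the monomorphism $\phi$ yields $\krk_G(\fix(\phi))\le\krk(G)=n$, that is $s+t\le n$ (in particular $s$ and $t$ are finite). Combining the two parts gives the asserted formula.

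The hard part will be the identification step, namely showing that the conjugates of factors which meet $\fix(\phi)$ nontrivially are honestly $\phi$-invariant for a uniform choice of conjugator $g_j$; this is precisely the content imported from \cite[Theorem 1.2]{Sy02}, so in the write-up it amounts to a careful translation of that theorem into the present notation, the delicate point being the normalization of the $g_j$. By contrast the Bass--Serre and Kurosh bookkeeping — in particular verifying that the decomposition coming from $\fix(\phi)\backslash T$ is genuinely the Kurosh decomposition, with $F_s$ absorbing no conjugate of a factor — and the final application of Theorem \ref{krk inj} are routine.
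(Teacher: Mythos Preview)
Your proposal is correct and follows essentially the same approach as the paper: invoke the Bass--Serre tree of the free product, apply Sykiotis' structure theorem \cite[Theorem 1.2]{Sy02} to obtain the free-product decomposition of $\fix(\phi)$ with factors of the form $\fix(\phi|_{g_jG_{\sigma(j)}g_j^{-1}})$, and then bound $s+t$ via Theorem \ref{krk inj}. The paper's own proof compresses all of this into two sentences, so your write-up is simply a more explicit unpacking of the same argument; in particular, your observation that injectivity of $\phi$ together with free indecomposability of the $G_i$ forces $\phi$ to be symmetric is a useful point that the paper leaves implicit.
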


\begin{proof}
Since $G=\ast_{i=1}^nG_i$ is a free product, $G$ acts on a tree $X$ with finite quotient graph and trivial edge groups. Moreover, the stabilizer $G_v$ of a vertex $v$ is a conjugate of some factor $G_i$. Then the conclusion of Theorem \ref{structure theorem} follows from Theorem \ref{krk inj} and \cite[Theorem 1.2]{Sy02} clearly.
\end{proof}

\section{Fixed subgroups of monomorphisms}\label{Section. 3}

In this section, we first introduce two properties on fixed subgroups, and then show some results on the fixed subgroups of monomorphisms of free products.

\subsection{$\fgfp$ property}

Recall that a group $G$ has $\fgfpm$ (resp. $\fgfpa$, $\fgfpe$), if for any $f\in \mon(G)$ (resp. $\aut(G)$, $\edo(G)$), the fixed subgroup $\fix(f)$ is finitely generated. Clearly, if $G$ has $\fgfpe$, then it has $\fgfpm$ and hence has $\fgfpa$. Moreover, the property $\fgfpa$ is not heritable, for example, in \cite{ZVW}, the authors showed that the group $F_n\times F_m$ for $m,n>1$ has $\fgfpa$ but its subgroup $F_2\times \Z$ does not. To quantitatively analysis the ranks of the fixed subgroups, we introduce

\begin{defn}\label{def for UFP}
Let $G$ be a finitely generated group, and let $\mon(G)$ denote the monoid of monomorphisms of $G$.

\begin{enumerate}[leftmargin=*]
\item $G$ is said to have $k$-$\fgfp$, if for any $\phi\in\mon(G)$, $\rk(\fix(\phi))\leq k\cdot \rk(G).$
The minimal number $k$ satisfying the above equation is said to be the \emph{$\fp$ degree} for the group $G$, denoted $\df(G)$. Namely,
$$\df(G):=\sup\left\{\frac{\rk(\fix(\phi))}{\rk(G)}~\mid~\phi\in\mon(G)\right\}\in [1, ~+\infty].$$

\item  $G$ is said to have $k$-$\ufgfp$ (``U" for uniformly), if for every finitely generated subgroup $H\leq G$ and any $\phi\in\mon(H)$,
    $\rk(\fix(\phi))\leq k\cdot \rk(H).$
The minimal number $k$ satisfying the above equation is said to be the \emph{$\ufp$ degree} for the group $G$, denoted $\duf(G)$. Namely,
$$\duf(G):=\sup\left\{\frac{\rk(\fix(\phi))}{\rk(H)}~\mid~H\leq G, ~\phi\in\mon(H)\right\}\in [1, ~+\infty].$$
\end{enumerate}
\end{defn}

Note that in the above definitions, we only consider monomorphisms, and omit the cases of endomorphisms and automorphisms because that are similar.  Clearly, $k$-$\ufgfp$ implies $k$-$\fgfp$, and many kinds of groups have $k$-$\fgfp$. Furthermore,  the property $k$-$\ufgfp$ is heritable. In precisely,

\begin{lem}\label{uf index}
Let $G$ be a finitely generated group and let $H\leq G$ be a finitely generated subgroup.
\begin{enumerate}[leftmargin=*]
\item $1\leq \df(G)\leq \duf(G)\leq \infty$;

\item If $G$ has $k$-$\ufgfp$, then every subgroup $H\leq G$ also has $k$-$\ufgfp$ and hence has $k$-$\fgfp$, i.e.,
$$1\leq \df(H)\leq \duf(H)\leq \duf(G)\leq k.$$

\item $\df(G)=\duf(G)=1$ if $G$ is one of the following,

(a) ~free abelian groups $\Z^n$, \quad (b) ~free groups $F_n$, \quad (c) ~surfaces groups.

\item $\df(F_2\times \Z)=\infty$, and hence $\duf(G)=\infty$ if $G$ contains a subgroup that is isomorphic to $F_2\times \Z$.
\end{enumerate}
\end{lem}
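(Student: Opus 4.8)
The plan is to settle parts (1)--(3) by unwinding the definitions and quoting the rank bounds already recorded, and to reserve the real work for the explicit construction needed in part (4). For (1), I would observe that $\id\in\aut(G)\subseteq\mon(G)$ has $\fix(\id)=G$, so the ratio $\rk(\fix(\id))/\rk(G)=1$ gives $\df(G)\geq 1$; since the supremum defining $\duf(G)$ is taken over a collection of ratios that \emph{contains} the one defining $\df(G)$ (take $H=G$), this also yields $\df(G)\leq\duf(G)$, while $\duf(G)\leq\infty$ is vacuous. For (2) the point is transitivity of the relation ``is a finitely generated subgroup of'': if $H\leq G$ and $H'\leq H$ is finitely generated, then $H'$ is a finitely generated subgroup of $G$, so every ratio $\rk(\fix(\phi))/\rk(H')$ occurring in the supremum for $\duf(H)$ also occurs in the supremum for $\duf(G)$; hence $\duf(H)\leq\duf(G)$. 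Together with (1) this is the displayed chain, and in particular $\duf(G)\leq k$ forces $\duf(H)\leq k$.

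For part (3) I would show that in each of the three families \emph{every} finitely generated subgroup $H\leq G$ satisfies $\rk(\fix(\phi))\leq\rk(H)$ for all $\phi\in\mon(H)$ (indeed for all $\phi\in\edo(H)$), so $\duf(G)\leq 1$ and equality with $\df(G)$ follows from (1). For (a), a subgroup of $\Z^n$ is free abelian of rank at most $n$ and $\fix(\phi)$ is a subgroup of it. For (b), a finitely generated subgroup of $F_n$ is a finitely generated free group by Nielsen--Schreier, and the required inequality is exactly Theorem \ref{Scott conj for free gp endo}. For (c), write $G=\pi_1(S)$ with $S$ a closed surface of negative Euler characteristic; a finite-index finitely generated subgroup $H\leq G$ corresponds to a finite cover which is again a closed surface, of Euler characteristic $[G:H]\chi(S)<0$, so $H$ is a surface group and Theorem \ref{scott conj for surface gp} applies, whereas an infinite-index one corresponds to a noncompact cover and is therefore a (finitely generated) free group, so Theorem \ref{Scott conj for free gp endo} applies.

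Part (4) carries the content. I would exhibit a single automorphism of $F_2\times\Z$ with infinitely generated fixed subgroup. Writing $F_2=\langle a,b\rangle$ and $\Z=\langle t\rangle$, define $\phi$ on generators by $\phi(a)=a$, $\phi(b)=bt$, $\phi(t)=t$. Since $t$ is central in $F_2\times\Z$, $\phi$ respects the relations $[a,t]=[b,t]=1$, hence is a well-defined endomorphism, and it is an automorphism with inverse $a\mapsto a$, $b\mapsto bt^{-1}$, $t\mapsto t$. Expressing a general element uniquely as $(w,t^{k})$ with $w\in F_2$, a short computation (the $F_2$-coordinate of $\phi(w,t^{k})$ is again $w$, while each $b^{\pm1}$ in $w$ contributes a $t^{\pm1}$ to the $\Z$-coordinate) gives $\phi(w,t^{k})=(w,t^{\,k+\eps_b(w)})$, where $\eps_b\colon F_2\to\Z$ is the ``exponent sum of $b$'' homomorphism; hence $\fix(\phi)=\ker(\eps_b)\times\Z$. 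Now $\ker(\eps_b)$ is the normal closure of $a$ in $F_2$, a nontrivial normal subgroup of infinite index, and it contains the free subgroup $\langle a,\,bab^{-1},\ldots,b^{N}ab^{-N}\rangle$ of rank $N+1$ for every $N$, so it is infinitely generated. Therefore $\rk(\fix(\phi))=\infty$ and $\df(F_2\times\Z)=\infty$. Finally, if $G$ contains a subgroup $H\cong F_2\times\Z$, then using $H$ (with this $\phi$) as the test subgroup in the supremum defining $\duf(G)$ forces $\duf(G)=\infty$.

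I expect the only genuine obstacle to be pinning down the construction in (4): checking that $\phi$ is a well-defined automorphism, identifying $\fix(\phi)$ precisely as $\ker(\eps_b)\times\Z$, and confirming that this kernel is truly infinitely generated. Everything else is routine bookkeeping with the definitions together with the previously stated theorems of Imrich--Turner and Jiang--Wang--Zhang.
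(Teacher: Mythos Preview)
Your proposal is correct and follows essentially the same approach as the paper. The paper dismisses (1) and (2) as trivial and says (3) ``follows clearly'' from Theorems \ref{Scott conj for free gp endo} and \ref{scott conj for surface gp}, so your more explicit treatment (Nielsen--Schreier for subgroups of $F_n$, the finite/infinite-index dichotomy for subgroups of surface groups) just fills in details the paper omits; for (4) the paper uses the automorphism $a\mapsto at$, $b\mapsto b$, $t\mapsto t$ and identifies $\fix(f)$ with elements of zero $a$-exponent sum, which is your construction with the roles of $a$ and $b$ interchanged.
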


\begin{proof}
Items (1) and (2) are trivial. Item (3) follows from Theorem \ref{Scott conj for free gp endo} and Theorem \ref{scott conj for surface gp} clearly.
To prove item (4), it suffices to show that $\fix(f)$ is not finitely generated for some $f\in \mon(F_2\times \Z)$. Indeed,
let
$$F_2\times \Z=\langle a,b,t \mid [a,t], [b,t] \rangle,$$
and let $f:F_2\times \Z\to F_2\times \Z$  such that $a\mapsto at$, $b\mapsto b$, $t\mapsto t$.
Then an element $u\in \fix(f)$ if and only if it has zero exponent sum in $a$.
So $\fix(f)$ is isomorphic to $F_{\infty}\times \Z$ generated by the infinite set $\{t, ~a^iba^{-i}|i\in\Z\}$.
\end{proof}

\subsection{$\fgfp$ in free products}\label{subsect. key prop}

In this subsection, we will show that $\fgfpm$ and $\fgfpa$ are preserved under free products.
To give the proof, we introduce two key propositions.

\begin{prop}\label{Main Lemma}
Let $G=\ast_{i=1}^n G_i$ be a free product, where each factor $G_i$ is a freely indecomposable group satisfying  $\rk\fix(f)\leq k_i\cdot \rk(G_i)$ for any $f\in \mon(G_i)$. Then for any monomorphism $\phi\in\mon(G)$, we have
$$\rk(\fix(\phi))\leq n(\max_{i=1}^n k_i)(\rk(G)-n+1),$$
while all the factors $G_i$ share the same rank, we have $\rk(\fix(\phi))\leq (\max_{i=1}^n k_i)\rk(G).$
\end{prop}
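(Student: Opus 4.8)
The plan is to combine the structure theorem for fixed subgroups of monomorphisms (Theorem~\ref{structure theorem}) with Grushko's additivity of rank under free products, and then finish with an elementary counting inequality. First I would invoke Theorem~\ref{structure theorem}: since every $G_i$ is freely indecomposable, it applies to the given $\phi\in\mon(G)$ and gives
$$\fix(\phi)=\ast_{j=1}^{t}\fix\bigl(\phi|_{g_jG_{\sigma(j)}g_j^{-1}}\bigr)\ast F_s,\qquad s+t\le n .$$
Each map $\phi|_{g_jG_{\sigma(j)}g_j^{-1}}$ is an injective endomorphism of the subgroup $g_jG_{\sigma(j)}g_j^{-1}$, which conjugation by $g_j^{-1}$ carries isomorphically onto $G_{\sigma(j)}$; so the hypothesis on $G_{\sigma(j)}$ gives $\rk\,\fix\bigl(\phi|_{g_jG_{\sigma(j)}g_j^{-1}}\bigr)\le k_{\sigma(j)}\rk(G_{\sigma(j)})$.

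Next I would apply Grushko's theorem to this free product decomposition of $\fix(\phi)$, obtaining $\rk(\fix(\phi))=\sum_{j=1}^{t}\rk\,\fix\bigl(\phi|_{g_jG_{\sigma(j)}g_j^{-1}}\bigr)+s$. Writing $k=\max_i k_i$ (which is $\ge 1$, as one sees by taking $f=\id$ in the hypothesis) and $M=\max_i\rk(G_i)$, and using $s\le ks$, the previous step gives
$$\rk(\fix(\phi))\le\sum_{j=1}^{t}k_{\sigma(j)}\rk(G_{\sigma(j)})+s\le k\Bigl(\sum_{j=1}^{t}\rk(G_{\sigma(j)})+s\Bigr)\le k\,(tM+s).$$

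It then remains to bound $tM+s$ using only $t+s\le n$, $t\le n$ and $M\ge 1$: one writes $tM+s\le tM+(n-t)=n+t(M-1)\le nM$, and then notes that $\rk(G)=\sum_i\rk(G_i)\ge M+(n-1)$ (each $\rk(G_i)\ge 1$) forces $M\le\rk(G)-n+1$, so $tM+s\le n(\rk(G)-n+1)$; plugging this back gives the first claimed inequality. For the special case, if all factors share the common rank $m$ then $M=m$ and $\rk(G)=nm$, so $tM+s\le n+t(m-1)\le nm=\rk(G)$, which yields $\rk(\fix(\phi))\le k\rk(G)$.

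I expect the only real subtlety to be that Theorem~\ref{structure theorem} does not assert that the index function $\sigma$ is injective, so $\sum_j\rk(G_{\sigma(j)})$ cannot be bounded directly by $\rk(G)=\sum_i\rk(G_i)$; this is precisely why the argument passes through the crude bound $M=\max_i\rk(G_i)$ for each summand and why the factor $n$ survives in the general estimate. Everything else is routine bookkeeping with ranks.
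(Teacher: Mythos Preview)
Your proposal is correct and follows essentially the same route as the paper's proof: invoke Theorem~\ref{structure theorem}, bound each factor $\fix(\phi|_{g_jG_{\sigma(j)}g_j^{-1}})$ via the hypothesis and conjugation, then use Grushko together with $s+t\le n$ and $\max_i\rk(G_i)\le\rk(G)-n+1$ to conclude. The paper's bookkeeping is slightly terser (it bounds each summand directly by $k_{\sigma(j)}(\rk(G)-n+1)$ rather than routing through your intermediate quantity $M$), and it likewise remarks afterward that $\sigma$ need not be injective, exactly the subtlety you flag.
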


Before give the proof, we have a direct corollary on the $\fp$ degree of fixed subgroups.

\begin{cor}\label{Main Lemma on degree}
Let $G=\ast_{i=1}^n G_i$, where each $G_i$ is a freely indecomposable group with finite $\fp$ degree $\df(G_i)$. Then
$$\df(G)\leq n\cdot\max_{i=1}^n \df(G_i),$$
while all the factors $G_i$ have the same rank, then $\df(G)\leq \max_{i=1}^n \df(G_i).$
\end{cor}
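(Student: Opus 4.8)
The plan is to deduce Corollary \ref{Main Lemma on degree} directly from Proposition \ref{Main Lemma}; almost all the substance is already contained there, and what remains is bookkeeping with the supremum that defines the $\fp$ degree.

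First I would unpack the hypotheses. Each $G_i$ is freely indecomposable and finitely generated (so that $\df(G_i)$ makes sense), and we are assuming $\df(G_i)<\infty$. From the defining formula
$$\df(G_i)=\sup\left\{\tfrac{\rk(\fix(f))}{\rk(G_i)}~\mid~f\in\mon(G_i)\right\},$$
finiteness lets me pass to the pointwise inequality $\rk(\fix(f))\le\df(G_i)\,\rk(G_i)$ for every $f\in\mon(G_i)$. Hence the factors $G_i$ satisfy exactly the hypothesis of Proposition \ref{Main Lemma} with $k_i:=\df(G_i)$, and that proposition yields, for every $\phi\in\mon(G)$,
$$\rk(\fix(\phi))\le n\Bigl(\max_{i=1}^n\df(G_i)\Bigr)\bigl(\rk(G)-n+1\bigr).$$

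Next I would divide by $\rk(G)$ and take the supremum over $\phi\in\mon(G)$. Since every $G_i$ is nontrivial, $\rk(G_i)\ge 1$, so Grushko's theorem gives $\rk(G)=\sum_{i=1}^n\rk(G_i)\ge n$; consequently $0<\rk(G)-n+1\le\rk(G)$ and $\tfrac{\rk(G)-n+1}{\rk(G)}\le 1$. Combining these observations,
$$\df(G)=\sup_{\phi\in\mon(G)}\frac{\rk(\fix(\phi))}{\rk(G)}\le n\max_{i=1}^n\df(G_i),$$
which is the first assertion. For the case in which all the $G_i$ have the same rank, I would instead invoke the second conclusion of Proposition \ref{Main Lemma}, namely $\rk(\fix(\phi))\le(\max_{i=1}^n\df(G_i))\,\rk(G)$, divide by $\rk(G)$, and take the supremum to obtain $\df(G)\le\max_{i=1}^n\df(G_i)$.

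There is no real obstacle here: the corollary is a formal consequence of Proposition \ref{Main Lemma}. The one point that deserves a word of care — and it is minor — is the transition from the supremum defining $\df(G_i)$ to the uniform pointwise bound $\rk(\fix(f))\le\df(G_i)\,\rk(G_i)$, which is legitimate precisely because $\df(G_i)$ is assumed finite; together with the elementary estimate $\rk(G)\ge n$, this is all that is needed.
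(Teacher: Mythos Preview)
Your proof is correct and matches the paper's intended approach: the paper presents the corollary as a ``direct corollary'' of Proposition \ref{Main Lemma} without further argument, and the bookkeeping you supply (setting $k_i=\df(G_i)$, using Grushko to get $\rk(G)\ge n$, and passing to the supremum) is exactly what is implicit there.
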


\begin{proof}[\textbf{Proof of Proposition \ref{Main Lemma}}]
 According to Theorem \ref{structure theorem}, $\fix(\phi)=\ast_{i=1}^sH_i\ast F_t$, where $s+t\leq n$ and each $H_i=\fix(\phi|_{g_iG_{\sigma(i)}g_i^{-1}})$ is the fixed subgroup of a conjugate of some $G_{\sigma(i)}$. Since $G_i$'s have the property $\rk\fix(f)\leq k_i\cdot \rk(G_i)$ for any $f\in \mon(G_i)$,  we have
 $$\rk(H_i)\leq k_{\sigma(i)}\cdot \rk(g_iG_{\sigma(i)}g_i^{-1})=k_{\sigma(i)}\cdot\rk(G_{\sigma(i)})\leq k_{\sigma(i)}\cdot (\rk(G)-n+1).$$
 It follows
\begin{eqnarray}
\rk(\fix(\phi))&=& t+\sum_{i=1}^s \rk(H_i) \leq t+\sum_{i=1}^s k_{\sigma(i)}\cdot \rk(G_{\sigma(i)})\notag\\
               &\leq& n(\max_{i=1}^n k_i)(\rk(G)-n+1).
\end{eqnarray}
In particular, if all the factors $G_i$ have the same rank $\rk(G_1)=\cdots=\rk(G_n)$, then
$$\rk(\fix(\phi)) \leq t+\sum_{i=1}^s k_{\sigma(i)}\cdot \rk(G_{\sigma(i)})
               \leq (\max_{i=1}^n k_i)\cdot\rk(G).$$
\end{proof}

\rem Note that in the previous proof, $G_{\sigma(i)}$ may equal to $G_{\sigma(j)}$ for distinct $i\neq j$, see Section \ref{Section. 4} for an example.

\begin{prop}\label{key prop}
Let $G=\ast_{i=1}^n G_i$ be a free product, where each factor is a (not necessarily freely indecomposable) group $G_i$ with finite UFP degree $\duf(G_i)$. Then for any subgroup $H\leq G$ and any monomorphism $\phi\in\mon(H)$,
$$\rk(\fix(\phi))\leq \frac{1}{4}\ell (\rk(H)+1)^2,$$
where $\ell=\max_{i=1}^n\duf(G_i)$.
\end{prop}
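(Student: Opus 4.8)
The plan is to deduce Proposition \ref{key prop} from Proposition \ref{Main Lemma} in two moves: first, rewrite $H$ as a free product of finitely many \emph{freely indecomposable} groups each of whose $\fp$-degree is at most $\ell$; second, feed this into Proposition \ref{Main Lemma} and optimize the resulting estimate over the number of factors.

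\emph{Step 1: a structural reduction.} We may assume $H\neq 1$. By Grushko's theorem, $H = L_1 \ast \cdots \ast L_q \ast F_r$, where each $L_k$ is nontrivial, freely indecomposable and not infinite cyclic, and $F_r$ is a free group of rank $r$; writing $F_r$ as a free product of $r$ copies of $\Z$ exhibits $H$ as a free product of $N := q + r = \krk(H)$ freely indecomposable factors. Now each $L_k$ is a nontrivial, freely indecomposable, non-infinite-cyclic subgroup of the free product $G = \ast_{i=1}^n G_i$, so the Kurosh subgroup theorem forces $L_k$ to be conjugate into some factor $G_{i_k}$. As a free factor of the finitely generated group $H$ it is finitely generated, so Lemma \ref{uf index}(1) and (2) give $\df(L_k) \leq \duf(L_k) \leq \duf(G_{i_k}) \leq \ell$, while the $\Z$-factors satisfy $\df(\Z) = 1 \leq \ell$ by Lemma \ref{uf index}(3). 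Hence each of the $N$ freely indecomposable factors $L$ in this decomposition of $H$ satisfies $\rk\fix(f) \leq \ell\cdot\rk(L)$ for every $f \in \mon(L)$.

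\emph{Step 2: apply Proposition \ref{Main Lemma} and optimize.} Applying Proposition \ref{Main Lemma} to the decomposition of $H$ into these $N$ freely indecomposable factors, with all constants $k_i$ taken equal to $\ell$, gives, for any $\phi\in\mon(H)$,
$$\rk(\fix(\phi)) \leq N\,\ell\,\bigl(\rk(H) - N + 1\bigr) \leq \frac{1}{4}\,\ell\,(\rk(H)+1)^2,$$
where the last inequality is the elementary estimate
$$N\bigl(\rk(H)+1-N\bigr) = \frac{(\rk(H)+1)^2}{4} - \Bigl(N - \frac{\rk(H)+1}{2}\Bigr)^2 \leq \frac{(\rk(H)+1)^2}{4};$$
here $1 \leq N = \krk(H) \leq \rk(H)$ by the additivity of rank under free products recalled in Section \ref{Section. 2} (which in particular keeps $\rk(H)-N+1\ge 1$, so Proposition \ref{Main Lemma}'s bound is meaningful). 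This is exactly the asserted inequality.

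The main obstacle, modest as it is, lies entirely in Step 1: one must observe that a Grushko decomposition of an arbitrary finitely generated subgroup $H \leq \ast_{i=1}^n G_i$ already has the shape we need — every freely indecomposable factor is either infinite cyclic (hence $\df = 1$) or conjugate into one of the $G_i$ (hence $\df \leq \ell$) — which is precisely the content of the Kurosh subgroup theorem. Once this is in place, Proposition \ref{Main Lemma} together with the crude-but-sharp bound $N(\rk(H)+1-N)\le \tfrac14(\rk(H)+1)^2$ finishes the argument with no further delicate estimate.
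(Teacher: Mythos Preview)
Your proof is correct and follows essentially the same route as the paper's: decompose $H$ via Grushko into $s=\krk(H)$ freely indecomposable factors, use Kurosh to place each non-cyclic factor inside a conjugate of some $G_i$ (hence bounding its $\fp$ degree by $\ell$ via Lemma~\ref{uf index}), apply Proposition~\ref{Main Lemma}, and finish with the elementary quadratic bound $s(\rk(H)-s+1)\le \tfrac14(\rk(H)+1)^2$. Your write-up is in fact slightly more explicit than the paper's in justifying why each $L_k$ is finitely generated and in spelling out the optimization step.
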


\begin{proof}
 Let $H\leq G$ be a finitely generated subgroup and $\phi\in \mon(H)$ a monomorphism. Split $H$ as a free product of freely indecomposable factors, $H=\ast_{j=1}^sH_j$, where the absolute Kurosh rank (other than the Kurosh rank in $G$) $\krk(H)=s\leq\rk(H)$, and each factor $H_j$ is either a freely indecomposable group contained in a conjugate of a factor $G_{i_j}$ or the free cyclic group $\Z$ (with $\duf(\Z)=1$ clearly). Then Lemma \ref{uf index} implies
 $$\duf(H_j)\leq \duf(G_{i_j})\leq \max_{i=1}^n\duf(G_i)=\ell,$$
and hence $\rk\fix(f)\leq \ell\cdot\rk(H_j)$  for any $f\in \mon(H_j)$. Applying Proposition \ref{Main Lemma} to the monomorphism
 $$\phi: ~ H\to H=\ast_{j=1}^sH_j,$$
 we have $\rk(\fix(\phi))\leq \ell s(\rk(H)-s+1)\leq  \frac{1}{4}\ell(\rk(H)+1)^2.$
\end{proof}

\begin{thm}\label{main thm 0}
A free product $\ast_{i=1}^n G_i$ has $\fgfpm$ (resp. $\fgfpa$) if and only if the factor groups $G_1, G_2, \ldots, G_n$ all have $\fgfpm$ (resp. $\fgfpa$).
\end{thm}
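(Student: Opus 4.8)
The plan is to prove the two equivalences (for $\fgfpm$ and for $\fgfpa$) in parallel, since the arguments are formally identical once one has the right tools; I will phrase everything for $\fgfpm$ and remark that replacing $\mon$ by $\aut$ throughout changes nothing essential. The forward direction is the easy one: if $\ast_{i=1}^n G_i$ has $\fgfpm$, then each factor $G_i$ has $\fgfpm$. For this I would use that $G_i$ is a retract of $\ast_{i=1}^n G_i$ via the natural projection $\pi_i$ killing all other factors, together with the inclusion $\iota_i\colon G_i\hookrightarrow \ast_{i=1}^n G_i$. Given $f\in\mon(G_i)$, the composite $\iota_i f \pi_i$ is an endomorphism of the free product whose fixed subgroup contains $\iota_i(\fix(f))$; more carefully, one checks that $\fix(\iota_i f\pi_i)\cap G_i = \fix(f)$ after identifying $G_i$ with its image, so $\fix(f)$ is a finitely generated group whenever $\fix(\iota_i f\pi_i)$ is. The one point needing care is that $\iota_i f\pi_i$ is an endomorphism, not a monomorphism, of the free product, so strictly this argument proves that $\fgfpe$ of the free product implies $\fgfpm$ of the factors; to stay inside $\fgfpm$ I would instead argue directly that a finitely generated subgroup of $G_i$ that fails to be finitely generated as $\fix(f)$ also fails inside the free product, using that $G_i$ is a (subgroup-conjugacy-stable) free factor and the Kurosh subgroup theorem to pull the obstruction back — or, more cleanly, simply note that for the $\fgfpa$ statement the analogous $\iota_i f\pi_i$ construction does produce genuine endomorphisms and one can restrict attention to $\aut$ by a standard trick, while for $\fgfpm$ one invokes that $f$ extends to a monomorphism of the free product by acting as the identity on the other factors.

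For the reverse direction — each $G_i$ has $\fgfpm$ implies $\ast_{i=1}^n G_i$ has $\fgfpm$ — the whole machinery of Section \ref{Section. 3} is already set up, so I would proceed as follows. First decompose each $G_i = \ast_j G_{ij}$ into freely indecomposable factors by Grushko's theorem (absorbing any $\Z$ summands), so that the free product is $\ast_{i,j} G_{ij}$ with every factor either freely indecomposable or infinite cyclic. Since each $G_i$ has $\fgfpm$, each finitely generated freely indecomposable factor $G_{ij}$ has $\fgfpm$ as well (this uses the forward direction applied to $G_i$ itself, or directly the fact that a free factor of a group with $\fgfpm$ has $\fgfpm$); in particular each $G_{ij}$ has some finite $\fp$ degree $\df(G_{ij})$, and $\Z$ has $\fp$ degree $1$. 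Then I would apply Theorem \ref{structure theorem}: for any $\phi\in\mon(\ast_{i,j}G_{ij})$, $\fix(\phi) = \ast_{k=1}^t \fix(\phi|_{g_k G_{\sigma(k)} g_k^{-1}}) \ast F_s$ with $s+t \le \krk(\ast_{i,j}G_{ij})$ finite, and each $\fix(\phi|_{g_k G_{\sigma(k)} g_k^{-1}})$ is the fixed subgroup of a monomorphism of a conjugate of one of the freely indecomposable factors, hence finitely generated by hypothesis. Therefore $\fix(\phi)$ is a free product of finitely many finitely generated groups, hence finitely generated. This is exactly the content already extracted in Proposition \ref{Main Lemma} (take $k_i = \df(G_i)$, which is finite), so in fact the reverse direction follows immediately by citing Proposition \ref{Main Lemma} with the $k_i$ finite.

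The main obstacle I anticipate is purely bookkeeping rather than conceptual: making sure the forward implication stays honestly inside the class of morphisms named in the statement ($\mon$ for the $\fgfpm$ case, $\aut$ for the $\fgfpa$ case), because the obvious "extend by identity on the other factors" construction lands in $\mon$ or $\aut$ of the free product only when one is careful, whereas "project–apply–include" lands in $\edo$. The clean resolution is: given $f\in\mon(G_i)$, define $\tilde f\in\mon(\ast_j G_j)$ by $\tilde f|_{G_i}=f$ and $\tilde f|_{G_j}=\id_{G_j}$ for $j\ne i$ — this is a monomorphism of the free product by the universal property — and observe $\fix(\tilde f) = \fix(f) \ast (\ast_{j\ne i} G_j)$, so if the left side is finitely generated then (being a free product with finitely many finitely generated factors, the $G_j$ being finitely generated as factors of a finitely generated free product) $\fix(f)$ is finitely generated; the same formula with $f\in\aut(G_i)$ gives $\tilde f\in\aut(\ast_j G_j)$, handling the $\fgfpa$ case verbatim. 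With that observation in place both directions are short, and the theorem follows.
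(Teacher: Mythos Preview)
Your proposal is correct and matches the paper's argument: the forward direction via $\tilde f = f * \id_2 * \cdots * \id_n$ with $\fix(\tilde f) = \fix(f) * G_2 * \cdots * G_n$, and the reverse direction via reduction to freely indecomposable factors followed by Theorem \ref{structure theorem}, are exactly what the paper does (the paper phrases the forward direction contrapositively, but it is the same construction). One minor caution: your aside that $\fgfpm$ for $G_{ij}$ yields a finite $\df(G_{ij})$ is unjustified---finite generation of each individual $\fix(\phi)$ gives no uniform bound over all $\phi\in\mon(G_{ij})$---so you cannot literally cite Proposition \ref{Main Lemma}; but since your direct argument from Theorem \ref{structure theorem} already shows each free factor of $\fix(\phi)$ is finitely generated, this does not affect the proof.
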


\begin{proof}
Since the proofs of the two cases  $\fgfpm$ and $\fgfpa$ are parallel, we only consider the case $\fgfpm$, and leave the other case $\fgfpa$ for the reader.

Ar first, we prove the ``only if" part. Without loss of generality, suppose $G_1$ dose not have $\fgfpm$, i.e., there is a monomorphism $f: G_1\to G_1$ such that $\fix(f)$ is not finitely generated. Let $\id_i\in\mon(G_i)$ be the identity of $G_i$. Then the free product $f*\id_2*\cdots*\id_n\in \mon(G)$. Clearly, the fixed subgroup
$$\fix(f*\id_2*\cdots*\id_n)=\fix(f)*G_2*\cdots *G_n$$
is not finitely generated, contradicting the hypothesis that $G$ has $\fgfpm$.

Now let us consider the ``if" part. There are two cases:

(i) All the factors $G_i$ are freely indecomposable. Then any $f\in\mon(G)$ maps each factor $G_i$ to a conjugate of some $G_j$, as in the proof of Proposition \ref{Main Lemma}, we have $\fix(f)=\ast_{i=1}^sH_i\ast F_t$, where $s+t\leq n$ and each $H_i=\fix(f|_{g_iG_{\sigma(i)}g_i^{-1}})$ is the fixed subgroup of a conjugate of some $G_{\sigma(i)}$. Since $G_i$ all have $\fgfpm$, the $H_i$ are all finitely generated and hence $\fix(f)$ is also finitely generated.

(ii) Some factors $G_i$ are freely decomposable. Then each $G_i$ can be decomposed into $G_i=G'_{i_1}\ast\cdots\ast G'_{i_j}$, where each factor $G'_{i_k}$ is freely indecomposable. Since $G_i$ has $\fgfpm$, the factors $G'_{i_k}$ all have $\fgfpm$ by the ``only if" part. Therefore, we have reduced case (ii) to case (i).
\end{proof}

\begin{cor}\label{hyper 3 mfd gp}
Let $M=\#_{i=1}^n M_i$ be a connected sum of finitely many hyperbolic 3-manifolds. Then the fundamental group $\pi_1(M)$ has $\fgfpm$ (and hence $\fgfpa$). More precisely, for any monomorphism $f\in \mon(\pi_1(M))$, we have $\rk\fix(f)<2n\cdot\rk\pi_1(M)$.
\end{cor}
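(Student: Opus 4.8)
The plan is to pass to the free factors and invoke Lin--Wang's Theorem \ref{lin-wang hyper 3 mfd}. First I would observe that van Kampen's theorem gives $\pi_1(M)=\ast_{i=1}^n\pi_1(M_i)$ (removing an open $3$--ball does not change $\pi_1$, and $\pi_1(S^2)=1$). Each factor $\pi_1(M_i)$ is freely indecomposable: a finite-volume hyperbolic $3$--manifold is irreducible and aspherical, and its fundamental group is one-ended, hence has no nontrivial free-product splitting. Thus $\ast_{i=1}^n\pi_1(M_i)$ is precisely the decomposition of $\pi_1(M)$ into freely indecomposable factors, so Theorem \ref{structure theorem} applies to every $f\in\mon(\pi_1(M))$.

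The crucial point is that every $\pi_1(M_i)$ is co-Hopfian. By Mostow--Prasad rigidity, a finite-index self-embedding of $\pi_1(M_i)$ would realize a finite cover of the same (finite) volume, forcing index one; and an infinite-index subgroup of $\pi_1(M_i)$ cannot be abstractly isomorphic to $\pi_1(M_i)$ for cohomological-dimension reasons. Hence $\mon(\pi_1(M_i))=\aut(\pi_1(M_i))$, and Theorem \ref{lin-wang hyper 3 mfd} yields $\rk\fix(\psi)<2\rk\pi_1(M_i)$ for every $\psi\in\mon(\pi_1(M_i))$. In particular each $\pi_1(M_i)$ has $\fgfpm$, so $\pi_1(M)$ has $\fgfpm$ (hence $\fgfpa$) by Theorem \ref{main thm 0}.

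For the explicit bound I would imitate the proof of Proposition \ref{Main Lemma} with $k_i=2$, but retaining the strict inequality. Given $f\in\mon(\pi_1(M))$, Theorem \ref{structure theorem} writes $\fix(f)=\ast_{j=1}^t\fix(f|_{g_j\pi_1(M_{\sigma(j)})g_j^{-1}})\ast F_s$ with $s+t\le n$; co-Hopficity makes each restriction an automorphism, so each summand has rank at most $2\rk\pi_1(M_{\sigma(j)})-1\le 2(\rk\pi_1(M)-n+1)-1$. Summing and using $s\le n-t$ together with $\rk\pi_1(M)\ge n$ gives $\rk\fix(f)\le n+2t(\rk\pi_1(M)-n)\le 2n\rk\pi_1(M)-n(2n-1)<2n\rk\pi_1(M)$.

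The main obstacle is justifying co-Hopficity cleanly when $M_i$ has cusps: for closed hyperbolic $3$--manifolds it follows readily from Mostow rigidity plus the Poincar\'e-duality obstruction to proper self-embeddings, but in the finite-volume non-compact case one must instead use Prasad rigidity together with a relative ($\mathrm{PD}_3$-pair) cohomological-dimension argument, or simply cite the known co-Hopficity of finite-covolume lattices in $\mathrm{PSL}_2(\mathbb C)$. Everything past that point is bookkeeping on top of Theorems \ref{structure theorem} and \ref{lin-wang hyper 3 mfd}.
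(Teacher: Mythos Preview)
Your proposal is correct and follows essentially the same route as the paper: reduce to the factors via Theorem \ref{structure theorem}/Proposition \ref{Main Lemma}, use co-Hopficity of $\pi_1(M_i)$ to upgrade monomorphisms to automorphisms, and then apply Lin--Wang's Theorem \ref{lin-wang hyper 3 mfd}. The only difference is that the paper simply cites \cite{BGHM10} for co-Hopficity of hyperbolic $3$--manifold groups (covering the cusped case as well) and then invokes Corollary \ref{Main Lemma on degree} directly, whereas your explicit bookkeeping $\rk\fix(f)\le 2n\,\rk\pi_1(M)-n(2n-1)$ actually handles the strict inequality more cleanly than the paper's terse appeal to that corollary.
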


\begin{proof}
Since each $M_i$ is a hyperbolic 3-manifold, the fundamental group $\pi_1(M_i)$ is co-Hopfian by \cite{BGHM10}, i.e., every $\phi\in \mon(\pi_1(M_i))$ is an automorphism. Then Theorem \ref{lin-wang hyper 3 mfd} implies that $\pi_1(M_i)$ is $2$-$\fgfp$. Note that the fundamental group $\pi_1(M)=\ast_{i=1}^n\pi_1(M_i)$, and each $\pi_1(M_i)$ is freely indecomposable because $M_i$ is a hyperbolic 3-manifold, then Corollary \ref{Main Lemma on degree} implies $\rk(\fix(f))<2n\cdot\rk\pi_1(M)$ for any $f\in\mon(\pi_1(M))$, and hence $\pi_1(M)$ has $\fgfp_m$.
\end{proof}

\section{Proofs of main results and some examples}\label{Section. 4}

In the last section, we first prove the main theorems, and then give some examples.

\subsection{Proofs of main results}
Note that subgroups of hyperbolic groups are limit groups, by a result of Sela \cite{Se09}, we have

\begin{prop}\label{aut on stable image}
Let $G$ be a torsion-free hyperbolic group, and $\phi\in\edo(G)$ an endomorphism. Then the restriction
$\phi_{\infty}: ~~\phi^{\infty}(G)\to \phi^{\infty}(G)$
is an automorphism, and the Kurosh rank
$$\krk_{\phi^{\infty}(G)}(\fix(\phi))\leq \krk(\phi^{\infty}(G)).$$
\end{prop}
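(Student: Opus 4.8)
The plan is to separate the two assertions and reduce each to a result quoted earlier. For the first assertion---that $\phi_\infty$ is an automorphism of $\phi^\infty(G)$---I would argue as follows. Since $G$ is torsion-free hyperbolic, every finitely generated subgroup of $G$ is a limit group; in particular each $\phi^n(G)$ is a limit group, and limit groups are equationally Noetherian and, crucially, Hopfian and co-Hopfian in the relevant sense (a result of Sela \cite{Se09}). The key point is that the descending chain $G \supseteq \phi(G) \supseteq \phi^2(G) \supseteq \cdots$ has images all of which are limit groups; one should invoke Sela's result that limit groups satisfy a descending chain condition on such images (or that an injective endomorphism composed appropriately stabilizes), so that $\phi^{\infty}(G) = \phi^{N}(G)$ for some $N$, and $\phi$ restricts to a \emph{surjective} endomorphism of $\phi^{\infty}(G)$. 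Surjectivity of $\phi_\infty$ is then automatic from $\phi^{\infty}(G)=\phi(\phi^{\infty}(G))$; and $\phi_\infty$ is injective because a limit group that maps onto itself must do so injectively---i.e. $\phi^{\infty}(G)$ is Hopfian, being a limit group (finitely generated and residually free, hence residually finite, hence Hopfian). Combining, $\phi_\infty \in \aut(\phi^{\infty}(G))$.

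For the second assertion, the bound $\krk_{\phi^{\infty}(G)}(\fix(\phi)) \leq \krk(\phi^{\infty}(G))$, I would observe that $\fix(\phi) = \fix(\phi_\infty)$, since any fixed point of $\phi$ lies in every $\phi^n(G)$ and hence in the stable image (this is already noted in the excerpt, right after the definition of stable image). Now $\phi_\infty$ is a monomorphism (indeed an automorphism) of the group $\phi^{\infty}(G)$, so Theorem \ref{krk inj} applied to $\phi_\infty \in \mon(\phi^{\infty}(G))$ gives exactly
$$\krk_{\phi^{\infty}(G)}(\fix(\phi_\infty)) \leq \krk(\phi^{\infty}(G)),$$
which is the desired inequality after substituting $\fix(\phi)=\fix(\phi_\infty)$.

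The main obstacle is the first assertion: making rigorous that $\phi_\infty$ is an automorphism. The delicate points are (a) that the stable image stabilizes after finitely many steps---this needs a genuine input about limit groups (their equational Noetherianity / the descending chain condition for Sela), not just abstract nonsense, since for general groups $\phi^{\infty}(G)$ need not equal any $\phi^N(G)$; and (b) that a finitely generated limit group admitting a surjective self-endomorphism must be Hopfian, which I would pin to residual finiteness of limit groups (they embed in finitely generated residually free, hence residually finite, groups by Sela's work) plus Malcev's theorem that finitely generated residually finite groups are Hopfian. Once these two facts are cited cleanly from \cite{Se09}, the rest is formal. I would also take care to record that $\phi^{\infty}(G)$ is finitely generated---it is a subgroup of the hyperbolic (hence Noetherian-in-the-subgroup-growth-sense is false, so this needs care) group $G$; actually finite generation follows because $\phi^{\infty}(G)=\phi^N(G)$ is a homomorphic image of the finitely generated group $G$---so that $\krk$ and $\krk_{\phi^\infty(G)}$ are well-defined in the first place.
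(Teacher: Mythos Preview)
Your handling of the Kurosh rank inequality is fine and matches the paper. The first assertion, however, contains a genuine gap. Sela's descending chain condition (\cite[Theorem~1.12]{Se09}) says that the chain of \emph{epimorphisms} $G \twoheadrightarrow \phi(G) \twoheadrightarrow \phi^2(G) \twoheadrightarrow \cdots$ eventually consists of isomorphisms---equivalently, $\phi_N := \phi|_{\phi^N(G)}$ is \emph{injective} for some $N$. It does \emph{not} say that the chain of \emph{subgroups} $\phi^n(G) \subseteq G$ stabilizes, and typically it does not: for $G=F_2$ and $\phi$ any proper monomorphism, every $\phi_n$ is injective from the start yet the subgroups $\phi^n(F_2)$ strictly decrease forever. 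So your claim $\phi^\infty(G) = \phi^N(G)$ is unfounded, and with it your quick deduction of surjectivity (and of finite generation of $\phi^\infty(G)$). A second, smaller problem: quotients of a torsion-free hyperbolic $G$ are $G$-limit groups, not limit groups over a free group, and need not be residually free; since residual finiteness of hyperbolic groups is Gromov's open question (as the paper's remark right after this proposition emphasizes), routing injectivity through Hopfianness via ``residually free $\Rightarrow$ residually finite $\Rightarrow$ Hopfian'' is not available here.

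The paper proceeds in the opposite order from yours and avoids Hopfianness entirely. The DCC gives injectivity of $\phi_N$ for some $N$ (hence of $\phi_\infty$, a further restriction), and surjectivity of $\phi_\infty$ is then proved directly: given $g \in \phi^\infty(G)$, pick $g_n$ with $\phi^n(g_n)=g$ for each $n \ge N$ and set $c_n := \phi^n(g_{n+1}) \in \phi^n(G)$; then $\phi(c_n)=g$ for all $n$, and injectivity of $\phi$ on $\phi^N(G)$ forces $c_N = c_n$ for every $n \ge N$, so $c_N \in \bigcap_{n\ge N}\phi^n(G) = \phi^\infty(G)$ is the required $\phi_\infty$-preimage of $g$.
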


\begin{proof}
Let $\phi_n=\phi|_{\phi^n(G)}: \phi^n(G)\to \phi^{n+1}(G)$ be the restriction of $\phi$ to $\phi^n(G)$.
First, we assume that $\ker(\phi_n)$ is non-trivial for all $n$. Since the subgroups $\phi^n(G)$ of $G$ are also quotients of $G$, we have a decreasing sequence of $G$-limit groups as in \cite[Theorem 1.12]{Se09}, but with infinitely many terms,
$$G>\phi(G)>\phi^2(G)>\phi^3(G)>\cdots,$$
we get a contradiction. So $\phi_N$ is injective for some $N$.
Below we show that $\phi_{\infty}$ is an automorphism.
Note that for any $g\in\phi^{\infty}(G)=\bigcap_{n=N}^{\infty}\phi^n(G),$
there exists $g_n\in G$ such that $\phi^n(g_n)=g$ for every $n\geq N$.
Let $c_n=\phi^n(g_{n+1})\in \phi^n(G)$. Then $\phi(c_n)=g$. Since $\phi_N$ is injective, we have $c_N=c_n\in \phi^n(G)$ for all $n\geq N$ and hence $c_N\in\phi^{\infty}(G)$. This gives surjectivity of $\phi_{\infty}$, and hence $\phi_{\infty}$ is an automorphism.

Note that $\fix(\phi)=\fix(\phi_{\infty})\leq \phi^{\infty}(G)$, by using Theorem \ref{krk inj}, we have $$\krk_{\phi^{\infty}(G)}(\fix(\phi))=\krk_{\phi^{\infty}(G)}(\fix(\phi_{\infty}))\leq \krk(\phi^{\infty}(G)).$$
The proof is completed.
\end{proof}

\begin{rem}
Although many hyperbolic groups have been showed to be residually finite, Gromov's famous question remains open: Is every hyperbolic group residually finite? Therefore, Proposition \ref{aut on stable image} is not a direct corollary of \cite[Lemma 5]{Sy07}: Let $\phi$ be an endomorphism of a f.g. residually finite group $G$. Then the restriction $\phi_{\infty}: \phi^{\infty}(G)\rightarrow \phi^{\infty}(G)$ of $\phi$ to $\phi^{\infty}(G)$ is a monomorphism.
\end{rem}

\begin{proof}[\textbf{Proof of Theorem \ref{Main theorem 1}}]
Since $G=\ast_{i=1}^n G_i$ is a torsion-free hyperbolic group, every factor $G_i$ is also hyperbolic and torsion-free.  Then Proposition \ref{aut on stable image} implies that the restriction $\phi_{\infty}=\phi|_{\phi^{\infty}(G)}:\phi^{\infty}(G)\to \phi^{\infty}(G)$
is an automorphism for any $\phi\in\edo(G)$. Furthermore, since $G$ is stably hyperbolic, by \cite[Proposition 2]{OT}, the stable image $\phi^{\infty}(G)$ is a free factor of $\phi^N(G)$ for some $N$. So
$$\rk(\phi^{\infty}(G))\leq \rk(\phi^N(G))\leq \rk(G),$$
the last inequality holds because $\phi^N(G))$ is a quotient of $G$. Note that  $\fix(\phi)=\fix(\phi_{\infty})$ and $\phi^{\infty}(G)\leq G$, applying Proposition \ref{key prop} to the automorphism $\phi_{\infty}:\phi^{\infty}(G)\to \phi^{\infty}(G)$, we have
$$\rk(\fix(\phi))=\rk(\fix(\phi_{\infty}))\leq  \frac{1}{4}\ell\cdot (\rk(\phi^{\infty}(G))+1)^2\leq \frac{1}{4}\ell(\rk(G)+1)^2,$$
where $\ell=\max_{i=1}^n \duf(G_i)$.
\end{proof}

\begin{proof}[\textbf{Proof of Theorem \ref{main thm 2}}]
Item (2) follows from Proposition \ref{Main Lemma} clearly.
Now we prove item (1). Let $G=\ast_{i=1}^t G_i\ast F_s$ be a free product, where $F_s$ is a free group of rank $s$, and each factor $G_i$ is a surface group. Then $G$ is torsion free and hyperbolic. Moreover, for any $n$, by Kurosh subgroup theorem, $\phi^n(G)$ is a free product of finitely many free and surface groups, and hence it is hyperbolic with rank $\rk(\phi^n(G))\leq \rk(G)$. It implies that $G$ is stably hyperbolic. Note that surface groups and free groups have $\ufp$ degree $\duf=1$ according to Lemma \ref{uf index}. Therefore, $G$ satisfies the hypothesis of Theorem \ref{Main theorem 1} with $\ufp$ degree $\duf(G_i)=1$, and hence the conclusion of item (1) holds.
\end{proof}

\begin{proof}[\textbf{Proof of Theorem \ref{main thm 3}}]
 Without loss of generality, let $G=\ast_{i=1}^n\Z^{t_i}$ with $t_1\geq t_2\geq \cdots \geq t_n$. Then For any $\phi\in\edo(G)$, according to Theorem \ref{fixed subgp in nilpotent gp}, the fixed subgroup has the form
$\fix(\phi)=\ast_{j=1}^m H_j$ with $\krk_G(\fix(\phi))=m\leq n$, where each $H_j$ is either $\Z$ or a subgroup of some conjugate of $\Z^{t_i}$ and hence $\rk(H_j)\leq \rk(\Z^{t_i})\leq t_1$. Thus
$$\rk(\fix(\phi))\leq nt_1\leq n(\rk(G)-n+1).$$
If $t_1=\ldots =t_n$, then the conclusion holds clearly.
\end{proof}

\subsection{Some examples}

The first example shows that the bound in Theorem \ref{main thm 3} is sharp.

\begin{exam}
Let $G=\Z\ast\Z^2=\langle t\rangle\ast\langle a,b \mid [a,b]\rangle$, and $\varphi\in \aut(G)$ such that
$$\varphi(t)=ta, \quad\varphi(a)=a, \quad\varphi(b)=b.$$
Then $\fix(\varphi)=\Z^2\ast t\Z^2t^{-1}$, and hence
$\rk(\fix(\phi))=2(\rk(G)-1)$.
\end{exam}

At last, we give an example for the fixed subgroup of a free product of surface and free groups.

\begin{exam}
 Let  $H=\pi_1(S_g)=\langle a_1, b_1, \ldots, a_{g-1}, b_{g-1}, a_g, b \mid [a_1, b_1]\cdots[a_{g-1}, b_{g-1}][a_g, b]\rangle$, the fundamental group of an orientable surface $S_g$ of genus $g\geq 2$, and the commutator $[a,b]=aba^{-1}b^{-1}$. Let $\psi: H\to H$ as follows,
  $$\psi(a_i)=a_i, ~i=1,2,\ldots,g; \quad\psi(b_i)=b_i, ~i=1,2,\ldots,g-1; \quad \psi(b)=ba_3.$$
 Then $\id\neq \psi\in\aut(H)$, and hence, by Theorem \ref{scott conj for surface gp}, $\rk\fix(\psi)<\rk(H)=2g.$ Indeed, it is clear that $\fix(\psi)=\langle a_1, b_1,\ldots, a_{g-1}, b_{g-1}, a_g \rangle\cong F_{2g-1}$.  Let $\Z\ast H=\langle t \rangle *H$, and
 $$\varphi :\Z\ast H\to \Z\ast H, \quad \quad \varphi(t)=tb\psi(b^{-1}),\quad \varphi|_{H}=\psi.$$
 Then $\varphi\in\aut(\Z\ast H)$. Note that for any $h\in H$,
 $$\varphi(tht^{-1})=tb\psi(b^{-1})\cdot\psi(h)\cdot(tb\psi(b^{-1}))^{-1}=t[i_b\comp \psi \comp i_{b^{-1}}(h)]t^{-1}\in tHt^{-1},$$
where $i_b: H\to H, ~h\mapsto bhb^{-1}$ is the inner automorphism induced by $b$. Therefore, $\varphi|_{tHt^{-1}}: tHt^{-1}\to tHt^{-1}$ is an automorphism, and $$tht^{-1}\in \fix(\varphi|_{tHt^{-1}})\Longleftrightarrow h\in\fix(i_b\comp \psi \comp i_{b^{-1}})\cong \fix(\psi).$$
It implies $\fix(\varphi|_{tHt^{-1}})\cong \fix(\psi)\cong F_{2g-1}$. Note that $\krk_G(\fix(\varphi))\leq \krk(\Z*H)=2$ according to Theorem \ref{structure theorem}, so
$$\fix(\varphi)=\fix(\psi)*\fix(\varphi|_{tHt^{-1}})\cong F_{4g-2},$$
and hence $$\rk(\Z*H)=2g+1<\rk(\fix(\varphi))=4g-2< 2(\rk(\Z*H)-1)=4g.$$
\end{exam}

\vspace{6pt}

\noindent\textbf{Acknowledgements.} The authors thank Ke Wang for helpful communications. The authors also thank the referee whose valuable and detailed comments helped to greatly improve this paper.


\begin{thebibliography}{dd}

\bibitem{AJZ22} Y. Antol\'{i}n and A. Jaikin-Zapirain, \emph{The Hanna Neumann conjecture for surface groups}, Compos. Math. 158 (2022), no. 9, 1850-1877. 

\bibitem{BH92} M. Bestvina and M. Handel, \emph{Train tracks and automorphisms of free groups}, Ann. of Math., 135 (1992), 1-51.

\bibitem{BM16} O. Bogopolski and O. Maslakova, \emph{An algorithm for finding a basis of the fixed point subgroup of an automorphism of a free group}, Internat. J.
Algebra Comput. 26 (2016), no. 1, 29--67. 

\bibitem{BGHM10} M. Bridson, D. Groves, J. Hillman and G. Martin, \emph{Cofinitely Hopfian groups, open mappings and knot complements}, Groups Geom. Dyn. 4 (2010), 693-707

\bibitem{Car22} A. Carvalho, \emph{On endomorphisms of the direct product of two free groups},  arXiv:2012.03635v2 (2020).

\bibitem{Cio22} L. Ciobanu, \emph{Posts correspondence problem: From computer science to
algebra, Reachability Problems}, 16th International Conference, RP 2022,
Kaiserslautern, Germany, October 17-21, 2022, Proceedings, Springer,
2022, 28-36

\bibitem{CL22} L. Ciobanu and A. Logan, \emph{Fixed points and stable images of endomorphisms for the free group of rank two}. J. Algebra 591 (2022), 538-576.

\bibitem{CT88} D. Collins and E. Turner, \emph{Free product fixed points,} J. London Math. Soc., 38(2)(1988), 67-76.

\bibitem{CT94} D. Collins and E. Turner, \emph{Efficient representatives for automorphisms of free products,} Michigan Math. J., 41 (1994), 443-464.

\bibitem{DV96} W. Dicks and E. Ventura, \emph{The group fixed by a family of injective endomorphisms of a free group,} Contemporary Mathematics \textbf{195}, AMS, Providence (1996).
			
\bibitem{DS75} J. Dyer and P. Scott, \emph{Periodic automorphisms of free groups}, Commun. Algebra, 3 (1975), 195-201.

\bibitem{FH18} M. Feighn and M. Handel, \emph{Algorithmic constructions of relative train track maps and CTs}, Groups Geom. Dyn. 12 (2018), no. 3, 1159-1238. 

\bibitem{HW04} T. Hsu and D. Wise, \emph{Groups with infinitely many types of fixed subgroups}, Israel J. Math., 144 (2004), 93--107.

\bibitem{IT89} W. Imrich, E.Turner, \emph{Endomorphism of free groups and their fixed points}, Math. Proc. Cambridge Philos. Soc., 105 (1989), 421-422.

\bibitem{JS} W. Jaco and P. Shalen, \emph{Surface homeomorphisms and periodicity}, Topology, 16 (1977), 347-367.

\bibitem{JWWZ21} B. Jiang, F. Wang, S. Wang and H. Zheng, \emph{Fixed subgroups of 3-manifold group automorphisms},  Topology Appl., 297 (2021), Paper No. 107708, 5 pp.

\bibitem{JWZ11} B. Jiang, S. Wang and Q. Zhang, \emph{Bounds for fixed points and fixed subgroups on surfaces and graphs}, Alg. Geom. Topology, 11(2011), 2297-2318.

\bibitem{LW14}  J. Lin and S. Wang, \emph{Fixed subgroups of automorphisms of hyperbolic 3-manifold groups}, Topology Appl., 173 (2014), 175-187.

\bibitem{Log21} A. Logan. \emph{The Equalizer Conjecture For The Free Group of Rank Two}. The Quarterly Journal of Mathematics, 2021(2):2.

\bibitem{Log22} A. Logan, \emph{The conjugacy problem for ascending HNN-extensions of
free groups}, arXiv:2209.04357 (2022).

\bibitem{Lym22} R. Lyman, \emph{Cts for free products}, arXiv:2203.08868 (2022).

\bibitem{Mut22} J. Mutanguha, \emph{Constructing stable images}, https://mutanguha.com/pdfs/relimmalgo.pdf (2022).

\bibitem{Neu92} W. Neumann, \emph{The fixed group of an automorphism of a word hyperbolic group is rational}, Invent. Math. 110 (1992), no. 1, 147-150. 


\bibitem{N2} J. Nielsen, \emph{Untersuchungen zur Topologie der geschlossenen zweiseitigen Flachen. II.}, Acta Math. 53 (1929), 1-76.

\bibitem{OT} J. O'Neill and E. Turner, \emph{Test elements and the retract theorem in hyperbolic groups,} New York J. Math. 6 (2000), 107-117.

\bibitem{Pa89} F. Paulin, \emph{Points fixes des automorphismes de groupe hyperbolique}, Ann. Inst. Fourier (Grenoble) 39(3)(1989), 651-662.

\bibitem{RSS13} E. Rodaro, P. Silva and M. Sykiotis, \emph{Fixed points of endomorphisms of graph groups}, J. group Theory 16 (2013), 573-583.

\bibitem{RV21} M. Roy and E. Ventura, \emph{Degrees of compression and inertia for free-abelian times free groups}, J. Algebra 568 (2021), 241-272. 

\bibitem{Se09} Z. Sela, \emph{Diophantine geometry over groups VII: The elementary theorey of a hyperbolic group}, Proc. London. Math. Soc., 99 (3) (2009) 217-273.

\bibitem{Sh99} J. Shor, \emph{On fixed subgroups of automorphisms in hyperbolic groups}, PhD thesis, Columbia University, 1999.	

\bibitem{Sy02} M. Sykiotis, \emph{Fixed points of symmetric endomorphisms of groups}, Internat. J. Algebra Comput.  12 (5)(2002), 737-745.
		
\bibitem{Sy07} M. Sykiotis, \emph{Fixed subgroups of endomorphisms of free products}, J. Algebra, 312 (2007), 274-278.		

\bibitem{Ve02} E. Ventura, \emph{Fixed subgroups in free groups: A survey,} Contemp. Math., 296 (2002), 231-255.

\bibitem{Ven21} E. Ventura, \emph{The multiple endo-twisted conjugacy problem is solvable in finitely generated free groups}, https://enricventura.staff.upc.edu/ventura/files/68t.pdf (2021)

\bibitem{WVZ} J. Wu, E. Ventura and Q. Zhang, \emph{Fixed subgroups in direct products of surface groups of Euclidean type}, Commun. Algebra, 48(7)(2020), 3003-3010,

\bibitem{WZ} J. Wu and Q. Zhang,  \emph{The group fixed by a family of endomorphisms of a surface group}, J. Algebra, 417 (2014), 412-432.

\bibitem{Zhang1} Q. Zhang, \emph{Bounds for fixed points on Seifert manifolds}, Topology Appl., 159 (2012), 3263-3273.

\bibitem{Zhang2} Q. Zhang, \emph{The fixed subgroups of homeomorphisms of Seifert manifolds}, Acta Math. Sin. (Engl. Ser.), 31 (2015), 797-810.

\bibitem{ZVW} Q. Zhang, E. Ventura and J. Wu, \emph{Fixed subgroups are compressed in surface groups}, Internat. J. Algebra Comput., 25 (2015), 865-887.

\bibitem{ZZ1} Q. Zhang and X. Zhao, \emph{Fixed point indices and fixed words at infinity of selfmaps of graphs}. Topology Appl. 302 (2021), Paper No. 107819, 25 pp.

\bibitem{ZZ2} Q. Zhang and X. Zhao, \emph{Fixed point indices and fixed words at infinity of selfmaps of graphs II}, Topol. Methods Nonlinear Anal., to appear, 11 pp.	
		
	\end{thebibliography}
\end{document}